\documentclass[11pt,onecolumn]{IEEEtran}

\usepackage{amssymb,amsmath, amsthm}
\usepackage{subfigure}
\usepackage{graphicx}
\usepackage{booktabs}
\usepackage{xcolor}
\usepackage{url}
\usepackage[noadjust]{cite}
\usepackage{hyperref}
\usepackage{movie15}

\usepackage{tikz}
\usetikzlibrary{arrows}
\usetikzlibrary{graphs}
\usetikzlibrary{graphs.standard}
\usetikzlibrary{decorations.markings}
\usetikzlibrary{shapes}

\newcommand{\din}{d^{\text{in}}}
\newcommand{\dout}{d^{\text{out}}}
\newcommand{\dink}{d^{\text{in}, k}}
\newcommand{\doutk}{d^{\text{out}, k}}
\newcommand{\Nin}{N^{\text{in}}}
\newcommand{\Nout}{N^{\text{out}}}
\newcommand{\Nink}{N^{\text{in},k}}
\newcommand{\Noutk}{N^{\text{out},k}}
\def\R{\mathbb{R}}
\newcommand{\1}{{\bf 1}}
\newcommand{\minimize}{\mathop{\operatorname{minimize}}}

\newtheorem{theorem}{Theorem}
\newtheorem{corollary}[theorem]{Corollary}
\newtheorem{proposition}[theorem]{Proposition}
\newtheorem{lemma}[theorem]{Lemma}
\newtheorem{assumption}{Assumption}

\def\ao{}
\def\an{}

\interfootnotelinepenalty=1000000

\begin{document}

\title{Network Topology and Communication-Computation Tradeoffs in Decentralized Optimization}
\author{Angelia Nedi\'{c}, Alex Olshevsky, and Michael~G.~Rabbat%
\thanks{A.~Nedi\'{c} is with the School of Electrical, Computer, and Energy Engineering, Arizona State University, Tempe, AZ, USA.}%
\thanks{A.~Olshevsky is with the Department of Electrical and Computer Engineering, Boston University, Boston, MA, USA.}%
\thanks{M.G.~Rabbat is with Facebook AI Research, Montr\'{e}al, Canada, and the Department of Electrical and Computer Engineering, McGill University, Montr\'{e}al, Canada.}%
\thanks{Email: angelia.nedich@asu.edu, alexols@bu.edu, michael.rabbat@mcgill.ca}%
\thanks{The work of A.N.\ and A.O.\ was supported by the Office of Naval Research under grant number N000014-16-1-2245. The work of A.O.\ was also supported by NSF under award CMMI-1463262 and AFOSR under award FA-95501510394. The work of M.R.\ was supported by the Natural Sciences and Engineering Research Council of Canada under awards RGPIN-2012-341596 and RGPIN-2017-06266.}
}
\maketitle

\begin{abstract} 
In decentralized optimization, nodes cooperate to minimize an overall objective function that is the sum (or average) of per-node private objective functions. Algorithms interleave local computations with communication among all or a subset of the nodes. Motivated by a variety of applications---decentralized estimation in sensor networks, fitting models to massive data sets, and decentralized control of multi-robot systems, to name a few---significant advances have been made towards the development of robust, practical algorithms with theoretical performance guarantees. This paper presents an overview of recent work in this area. In general, rates of convergence depend not only on the number of nodes involved and the desired level of accuracy, but also on the structure and nature of the network over which nodes communicate (e.g., whether links are directed or undirected, static or time-varying). We survey the state-of-the-art algorithms and their analyses tailored to these different scenarios, highlighting the role of the network topology.
\end{abstract}

\section{Introduction}
\label{sec:intro}

In multi-agent consensus optimization, $n$ agents or \emph{nodes}, as we will refer to them throughout this article, cooperate to solve an optimization problem. A local objective function $f_i : \R^d \rightarrow \R$ is associated with each node $i=1,\dots,n$, and the goal is for all nodes to find and agree on a minimizer of the average objective $f(x) = \frac{1}{n} \sum_{i=1}^n f_i(x)$ in a decentralized way. Each node maintains its own copy $x_i \in \R^d$ of the optimization variable, and node $i$ only has direct access to information about its local objective $f_i$; for example, node $i$ may be able to calculate the gradient $\nabla f_i(x_i)$ of $f_i$ evaluated at $x_i$. Throughout this article we focus on the case where the functions $f_i$ are convex (so $f$ is also convex) and where $f$ has a non-empty set of minimizers so that the problem is well-defined.

Because each node only has access to local information, the nodes must communicate over a network to find a minimizer of $f(x)$. Multi-agent consensus optimization algorithms are iterative, where each iteration typically involves some local computation followed by communication over the network.

\subsection{Architectures for Distributed Optimization}

Gradient descent is a simple, well-studied, and widely-used method for solving minimization problems, and it is one of the first methods one typically studies in a course on numerical optimization~\cite{Bertsekas2003,BoydVandenberghe,NocedalWright}. Gradient descent is a prototypical \emph{first-order} method because it only makes use of gradients $\nabla f(x) \in \R^d$ of a continuously differentiable objective function $f : \R^d \rightarrow \R$ to find a minimizer $x^*$, gradients being the first-order derivatives of $f$. It is useful to discuss how one may implement gradient descent in a distributed manner in order to build intuition for the multi-agent optimization methods on which we focus in this article.

Centralized gradient descent for minimizing the function $f(x)$ starts with an initial value $x^0$ and recursively updates it for $k = 1, 2, \dots,$ by setting
\begin{equation}
x^{k+1} = x^k - \alpha_k \nabla f(x^k), \label{eqn:gradient_descent}
\end{equation}
where $\alpha_1, \alpha_2, \dots,$ is a sequence of positive scalar step-sizes. When $f$ is convex, it has a unique minimum, and it is well-known that, for appropriate choices of the step-sizes $\alpha_k$, the sequence of values $f(x^k)$ converges to this minimum~\cite{Bertsekas2003,BoydVandenberghe,NocedalWright}.

Now, recall the multi-agent setup, where
\begin{equation}
f(x) = \frac{1}{n} \sum_{i=1}^n f_i(x) \label{eqn:ma_objective}
\end{equation}
and where the gradient $\nabla f_i(x)$ can only be evaluated at agent $i$. There are a variety of distributed architectures one may consider in this setting, and we discuss three here: 1) the master-worker architecture, 2) the fully-connected architecture, and 3) a general, connected architecture. They are depicted in Fig.~\ref{fig:architectures} and described next.

\begin{figure*}
\centering
\subfigure[][Master-worker]{\begin{tikzpicture}[every node/.style={draw},thick]
	\def \r {2cm}
	\node[rectangle] (0) at (0,0) {Master};
	\foreach \i in {1,2,3}{
		\node[circle] (\i) at ({90 - (\i-1)/5*360}:\r) {$f_\i$};
	}
    \node[draw=none,fill=none] (4) at ({90 - (3/5*360)}:\r) {$\ddots$};
    \node[circle] (5) at ({90 - (4/5*360)}:\r) {$f_n$};
	\path[]
	\foreach \i in {1,2,3,5}{
		(0) edge[<->] (\i)
	};
    \draw[dashed,<->] (0) -- (4);
\end{tikzpicture}} \hspace*{1em}
\subfigure[][Fully-connected]{\begin{tikzpicture}[every node/.style={draw},thick]
	\def \r {2cm}
	\foreach \i in {1,2,3}{
		\node[circle] (\i) at ({90 - (\i-1)/5*360}:\r) {$f_\i$};
	}
    \node[draw=none,fill=none] (4) at ({90 - (3/5*360)}:\r) {$\ddots$};
    \node[circle] (5) at ({90 - (4/5*360)}:\r) {$f_n$};
	\path[]
    	(1) edge[<->] (2)
        (1) edge[<->] (3)
        (1) edge[<->] (5)
        (2) edge[<->] (3)
        (2) edge[<->] (5)
        (3) edge[<->] (5);
    \path[dashed] 
    	(1) edge[<->] (4)
        (2) edge[<->] (4)
        (3) edge[<->] (4)
        (5) edge[<->] (4);
\end{tikzpicture}} \hspace*{1em}
\subfigure[][General connected]{\begin{tikzpicture}[every node/.style={draw},thick]
	\def \r {2cm}
	\foreach \i in {1,2,3}{
		\node[circle] (\i) at ({90 - (\i-1)/5*360}:\r) {$f_\i$};
	}
    \node[draw=none,fill=none] (4) at ({90 - (3/5*360)}:\r) {$\ddots$};
    \node[circle] (5) at ({90 - (4/5*360)}:\r) {$f_n$};
	\path[]
    	(1) edge[<->] (2)
        (1) edge[<->] (3)
        (2) edge[<->] (5)
        (3) edge[<->] (5);
    \draw[dashed,<->] (2) -- (4) -- (3);
\end{tikzpicture}}
\caption{Three example architectures for distributed optimization. (a) In the master-worker architecture, each agent sends and receives messages from the master node. (b) In a fully-connected architecture, each agent sends and receives messages to every other agent in the network. (c) In a general multi-agent architecture, each agent only communicates with a subset of the other agents in the network. Note, in these figures the positions of each agent aren't meant to reflect geographic locations; rather, the aim is just to depict the communication topology.}
\label{fig:architectures}
\end{figure*}
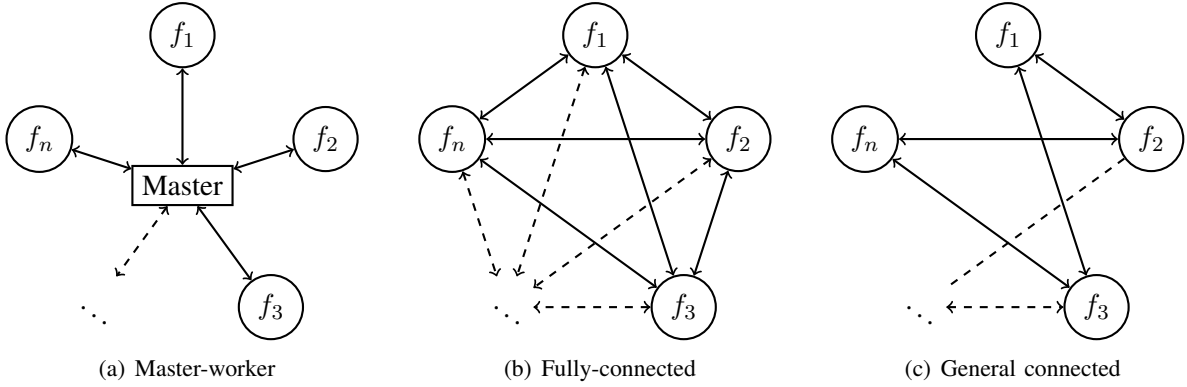

\subsubsection{The master-worker architecture} When $f(x)$ decomposes as in \eqref{eqn:ma_objective}, then the gradient also decomposes, and
\[
\nabla f(x) = \frac{1}{n}\big( \nabla f_1(x) + \nabla f_2(x) + \dots + \nabla f_n(x) \big),
\]
that is, the gradient of the overall objective is the average of the gradients of the local objectives.

In a master-worker architecture, one node acts as the \emph{master} (sometimes also called the fusion center), maintaining the authoritative copy of the optimization variable $x^k$. At each iteration, it sends $x^k$ to every agent, and agent $i$ returns $\nabla f_i(x^k)$ to the master. The master averages the gradients it receives from the agents, and once it has received a gradient from every agent it can perform the gradient descent update \eqref{eqn:gradient_descent}, before proceeding to the next iteration.

The master-worker architecture is useful in that it is relatively simple to implement. However in many applications it may be unattractive or impractical for a variety of reasons. First, as the number of nodes $n$ grows large, the master node may become a communication bottleneck if it has limited communication resources (e.g., if its bandwidth does not grow linearly with the size of the network), and at the same time, scaling the bandwidth of the master with the size of the network may be expensive or impractical. Also, the master node may become a robustness bottleneck, in the sense that if the master node fails then the entire network fails. In addition, in many scenarios it may not be practical to have a single master node that communicates with all agents. For example, if agents are low-power devices communicating via wireless radios, then two devices may only be able to communicate if they are nearby and it may not be practical to have all nodes within the required proximity of the master.

\subsubsection{The fully-connected architecture} A natural first step to address some of the issues of the master-worker architecture is to eliminate the master node, leading to a fully-connected peer-to-peer architecture, where each node communicates directly with all other nodes. In this case, each node $i=1,\dots,n$ maintains a local copy of the optimization variable, $x_i^k \in \R^d$. To mimic centralized gradient descent in a similar way, suppose that the local copies of the optimization variables are initialized to the same value, e.g., $x_i^0 = 0$ for all $i=1,\dots,n$. Then, each node computes its local gradient $\nabla f_i(x_i^0)$ and sends it to every other node in the network. Once a node has received gradients from all other nodes, it can average them, and since $x_i^0$ was initialized to be the same at every node, we have that
\[
\frac{1}{n} \sum_{j=1}^n \nabla f_j(x_j^0) = \nabla f(x_i^0), \quad \text{ for all } i=1,\dots,n.
\]
Thus, using the average of the gradients received from its neighbors, node $i$ can update
\begin{equation}
x_i^1 = x_i^0 - \alpha_0 \left(\frac{1}{n} \sum_{j=1}^n \nabla f_j(x_j^0)\right) \label{eqn:fully-connected-update}
\end{equation}
and $x_i^1$ is \emph{exactly} equivalent to having taken one step of centralized gradient descent. Furthermore, the values $x_i^1$ will be identical at all nodes, and so we can repeat this process recursively to essentially implement centralized gradient descent exactly in a distributed manner.

For the fully-connected architecture just described\footnote{We refer to it as \emph{fully-connected} because every node communicates with every other node at each iteration.}, each node acts like a master in the master-worker architecture, and so the fully-connected architecture suffers from the same issues as the master-worker architecture. Moreover, the communication overhead of having all nodes communicate at every iteration is even worse than the master-worker architecture (it grows quadratically in the number of nodes $n$, whereas the communication overhead was linear in $n$ for the master-worker architecture). Nevertheless, the fully-connected architecture provides a conceptual transition from the master-worker architecture to general connected (but not fully-connected) architectures.

\subsubsection{General multi-agent architectures} Consider a peer-to-peer architecture where node $i$ is only connected to a subset of the other nodes, and not necessarily all of them. Let $N_i \subset \{1,\dots,n\}$ denote the \emph{neighbors} of node $i$: the subset of nodes that sends messages to node $i$. Similar to the fully connected case, suppose that $x_i^0 \in \R^d$ is initialized to the same value at all nodes, and let $x_i^k$ denote the value at node $i$ after $k$ iterations. We can approximately implement gradient descent in a decentralized manner by mimicking the update \eqref{eqn:fully-connected-update}, but where agent $i$ only averages over the gradients it receives from its neighbors, so that
\begin{equation}
x_i^{k+1} = x_i^k - \alpha_k \left( \frac{1}{|N_i|} \sum_{j \in N_i} \nabla f_j(x_j^k) \right), \label{eqn:multi-agent-update}
\end{equation}
where $|N_i|$ is the size of node $i$'s neighborhood. 

This approach given in \eqref{eqn:multi-agent-update} is prototypical of most multi-agent optimization algorithms, in that the update equation can be implemented in the following steps, which are executed in parallel at every node, $i=1,\dots,n$:
\begin{enumerate}
\item Node $i$ locally computes $\nabla f_i(x_i^k)$.
\item Node $i$ transmits its gradient $\nabla f_i(x_i^k)$ and receives gradients $\nabla f_j(x_j^k)$ from its neighbors $j \in N_i$.
\item Node $i$ uses this new information to compute the new value $x_i^{k+1}$, e.g., via equation \eqref{eqn:multi-agent-update}.
\end{enumerate}
Different multi-agent optimization algorithms may differ in terms of what information gets exchanged in the second step, and in the precise way they compute the update in the last step, not necessarily using \eqref{eqn:multi-agent-update}, as well as in the assumptions they make about the local objective functions $f_i$ or the communication topology, captured by the neighborhoods $N_i$. For example: the communication topology may be static or it may vary from iteration to iteration; communications may be undirected (node $i$ receives messages from node $j$ if and only if $j$ also receives messages from $i$) or directed.

The general multi-agent approach to implementing gradient descent, given in \eqref{eqn:multi-agent-update}, also raises a set of issues which did not come up in the other architectures. Since the master-worker and fully-connected architectures exactly implement gradient descent, the well-established convergence theory for gradient descent directly applies to those architectures. However, when nodes update using the rule \eqref{eqn:multi-agent-update}, they no longer exactly implement centralized gradient descent, because they use a search direction
\[
\frac{1}{|N_i|} \sum_{j \in N_i} \nabla f_j(x_j^k) \ne \nabla f(x_i^k)
\]
which is the average of a subset, rather than all, of the gradients at other nodes. Thus, after the first iteration, the local values $x_j^1$ at different nodes are no longer equivalent. Subsequently, at the next iteration, the local gradients being averaged at node $i$ will have been evaluated at different values $x_j^1$. Therefore, there are a few ways in which the values produced by multi-agent gradient descent and deviates from centralized gradient descent. One may hope that, under the right conditions, the values at different nodes will not be too different from each other and that the local search directions will be sufficiently similar to the gradient search direction that the nodes still converge to (and agree on!) a minimizer of $f(x)$. 

Indeed, we will see that we can identify a variety of conditions under which multi-agent optimization algorithms are guaranteed to converge, and we can precisely quantify how the convergence rate differs from that of centralized gradient descent. Most often, this difference depends directly on the communication topology. In many applications of interest, either it is not possible or one does not allow each node to communicate with every other node. The connectivity of the network (i.e., which pairs of nodes may communicate directly with each other) can be represented as a graph with $n$ vertices and with an edge from $j$ to $i$ if node $j$ receives messages from node $i$. We will see that the communication network topology plays a key role in the convergence theory of multi-agent optimization methods in that it may limit the flow of information between distant nodes and thereby hinder convergence.

During the past decade, multi-agent consensus optimization has been the subject of intense interest, motivated by a variety of applications which we discuss in next.

\subsection{Motivating Applications}

The general multi-agent optimization problem described above was originally introduced and studied in the 1980's in the context of parallel and distributed numerical methods \cite{Tsi1986,tsitsiklisPhDThesis,bertsekasParallel}. The surge of interest in multi-agent convex optimization during the past decade has been fueled by a variety of applications where a network of autonomous agents must coordinate to achieve a common objective. We describe three such examples next; for a survey describing additional applications of multi-agent methods for coordination, see \cite{cao2013overview}.

\subsubsection{Decentralized estimation} Consider a wireless sensor network with $n$ nodes where node $i$ has a measurement $\zeta_i$ which is modeled as a random variable with density $p(\zeta_i | x)$ depending on unknown parameters $x$. {\color{black}For example, the network may be deployed to monitor a remote or difficult to reach location, and the estimate of $x$ may be used for scientific observation (e.g., bird migration patterns) or for detecting events (e.g., avalanches) \cite{barrenetxea2008sensorscope}.} In many applications of sensor networks, uncertainty is primarily due to thermal measurement noise introduced at the sensor itself, and so it is reasonable to assume that the observations $\{\zeta_i\}_{i=1}^n$ are conditionally independent given the model parameters $x$. In this case, the maximum likelihood estimate of $x$ can obtained by solving
\[
\minimize_x \quad - \sum_{i=1}^n \log p(\zeta_i | x),
\]
which can be addressed by using multi-agent consensus optimization methods~\cite{rabbat2004distributed,schizas2008consensus2,cattivelli2010diffusion,kar2011convergence,kar2012distributed,kar2014asymptotically} with $f_i(x) = - \log p(\zeta_i | x)$. 

{\color{black}In this example, the data are already being gathered in a decentralized manner at different sensors. When the data dimension is large (e.g., for image or video sensors), it can be more efficient to perform decentralized estimation and simply transmit the estimate of $x$ to the end-user, rather than transmitting the raw data and then performing centralized estimation \cite{rabbat2004distributed}. Similarly, even if the data dimension is not large, if the number $n$ of nodes in the network is large, it may still be more efficient to perform decentralized estimation rather than sending raw data to a fusion center, since the fusion center will become a bottleneck.}

When the nodes communicate over a wireless network, whether or not a given pair of nodes can directly communicate is typically a function of their physical proximity as well as other factors (e.g., fading, shadowing) affecting the wireless channel, which may possibly result in time-varying and directed network connectivity.

\subsubsection{Big data and machine learning} Many methods for supervised learning (e.g., classification or regression) can also be formulated as fitting a model to data. This task may generally be expressed as finding model parameters $x$ by solving
\begin{equation}
\minimize_x \quad \sum_{j=1}^m l_j(x), \label{eq:ml_training}
\end{equation}
where the loss function $l_j(x)$ measures how well the model with parameters $x$ describes the $j$th training instance, and the training data set contains $m$ instances in total. For many popular machine learning models---including linear regression, logistic regression, ridge regression, the LASSO, support vector machines, and their variants---the corresponding loss function is convex~\cite{understandingML}.

When $m$ is large, it may not be possible to store the training data on a single server, or it may be desirable, for other reasons, to partition the training data across multiple nodes (e.g., to speedup training by exploiting parallel computing resources, or because the data is being gathered and/or stored at geographically distant locations). In this case, the training task~\eqref{eq:ml_training} can be solved using multi-agent optimization with local objectives of the form~\cite{ram2009new,tsianos2012consensus,chen2012diffusion,Duchi2012,Jakovetic2011a}
\[
f_i(x) = \sum_{j \in \mathcal{J}_i} l_j(x),
\]
where $\mathcal{J}_i$ is the set of indices of training instances at node $i$.

In this setting, where the nodes are typically servers communicating over a wired network, it may be feasible for every node to send and receive messages from all other nodes. However, it is often still preferable, for a variety of reasons, to run multi-agent algorithms over a network with sparser connectivity. Communicating a message takes time, and reducing the number of edges in the communication graph at any iteration corresponds to reducing the number of messages to be transmitted. This results in iterations that take less time and also that consume less bandwidth.

\subsubsection{Multi-robot systems} Similar to the previous example, multi-agent methods have attracted attention in applications requiring the coordination of multiple robots because they naturally lead to decentralized solutions. One well-studied problem arising in such systems is that of \emph{rendezvous}---collectively deciding on a meeting time and location. When the robots have different battery levels or are otherwise heterogeneous, it may be desirable to design a rendezvous time and place, and corresponding control trajectories, which minimize the energy to be expended collectively by all robots. This can be formulated as a multi-agent optimization problem where the local objective $f_i(x)$ at agent $i$ quantifies the energy to be expended by agent $i$ and $x$ encodes the time and place for rendezvous~\cite{cortes2006robust,cao2013overview}.

When robots communicate over a wireless network, the network connectivity will be dependent on the proximity of nodes as well as other factors affecting channel conditions, similar to in the first example. Moreover, as the robots move, the network connectivity is likely to change. It may be desirable to ensure that a certain minimal level of network connectivity is maintained while the multi-robot system performs its task, and such requirements can be enforced by introducing constraints in the optimization formulation~\cite{zavlanos2013network}.

\subsection{Outline of the rest of the paper}

The purpose of this article is to provide an overview of the main advances in this field, highlighting the state-of-the-art methods and their analyses, and pointing out open questions. During the past decade, a vast literature has amassed on multi-agent optimization and related methods, and we do not attempt to provide an exhaustive review (which, in any case, would not be feasible in the space of one article). Rather, in addition to describing the main advances and results leading to the current state-of-the-art, we also seek to provide an accessible survey of theoretical techniques arising in the analysis of multi-agent optimization methods. 

{\color{black}
As we have already seen, decentralized averaging algorithms---where each node initially holds a number or vector, and the aim is to calculate the average at every node---form a fundamental building block of multi-agent optimization methods. Section~\ref{sec:consensus} reviews decentralized averaging algorithms and their convergence theory in the setting of undirected graphs, where node $i$ receives message from node $j$ if and only if $j$ also receives messages from $i$,. The main results of this section provide conditions under which decentralized averaging algorithms converge asymptotically to the exact average, and they quantify how close the values at each node are to the exact average after a finite number $k$ of iterations. We initially consider the general scenario where the communication topology is time-varying, finding that a sufficient condition for convergence is that the topology be sufficiently well-connected over periodic windows of time. Then, for the particular case where the communication topology is static, we present stronger results illustrating how the rate of convergence depends intimately on properties of the communication topology.

With these results for decentralized averaging in hand, Section~\ref{sec:undirected} presents multi-agent optimization methods and theory in the setting of undirected communication networks. This section reviews convergence theory for the centralized subgradient method, which generalizes gradient descent to handle convex functions which are not necessarily differentiable; such functions arise in a variety of important contemporary applications, such as estimators using $\ell_1$ regularization (e.g., the LASSO) or fitting support vector machines. The main results of this section establish conditions for convergence of decentralized, multi-agent subgradient descent, including quantifying how the rate of convergence depends on the network topology. This section also discusses recent advances which lead to faster convergence in certain settings, such as when the network size is known in advance, or when the objective function is strongly convex.

Section~\ref{sec:directed} then describes how decentralized averaging and multi-agent optimization methods can be extended to run over networks with directed connectivity (i.e., where node $i$ may receive messages from $j$ although $j$ does not receive messages from $i$). The key technique we study, which enables this extension, is the so-called ``push-sum'' approach. We provide a novel, concise analysis of the push-sum method for decentralized averaging, and then we describe how it can be used to obtain decentralized optimization methods.

Section~\ref{sec:extensions} discusses a variety of ways that the basic approaches described in Sections~\ref{sec:undirected} and~\ref{sec:directed} can be extended. For example, in both Sections~\ref{sec:undirected} and~\ref{sec:directed} we limit our attention to methods for unconstrained optimization problems running in a synchronous manner. Sections~\ref{sec:extensions} discusses how to handle constrained optimization problem and how multi-agent optimization methods can be implemented in an asynchronous manner. It also describes other extensions, such as handling stochastic gradient information or online optimization (where the objective function varies over time), and discusses connections to other methods for distributed optimization.

We conclude in Section~\ref{sec:conclusion} and highlight some open problems. 
}

\subsection{Notation}

Before proceeding, we summarize some notation that is used throughout the rest of this paper. A matrix is called \emph{stochastic} if it is nonnegative and the sum of the elements in each row equals one. A matrix is called \emph{doubly stochastic} if, additionally, the sum of the elements in each column equals one. To a stochastic matrix $A \in \R^{n \times n}$, we associate the directed graph $G_A$ with vertex set $\{1, \ldots, n\}$ and edge set $E_A = \{ (i,j) ~|~ a_{ji} > 0 \}$. Note that this graph may contain self-loops. A directed graph is \emph{strongly connected} if there exists a directed path from any initial vertex to every other vertex in the graph. For convenience, we abuse notation slightly by using the matrix $A$ and the graph $G_A$ interchangeably; for example, we say that $A$ is strongly connected. Similarly, we say that the matrix $A$ is \emph{undirected} if $(i,j) \in E_A$ implies $(j,i) \in E_A$. Finally, we denote by $[A]_{\alpha}$ the thresholded matrix obtained from $A$ by setting every element smaller than $\alpha$ to zero.

Given a sequence of stochastic matrices $A^0, A^1, A^2 \ldots$, \ao{for $k > l$,} we denote by $A^{k:l}$ the product of matrices $A^k$ to $A^l$ inclusive, i.e., 
\[ A^{k:l} = A^k A^{k-1} \cdots A^l. \] We say that a matrix sequence is {\em $B$-strongly-connected} if the graph with vertex set $\{1, \ldots, n\}$ and edge set \[ \bigcup_{k = lB}^{(l+1)B-1} E_{A^k} \] is strongly connected for each $l=0,1,2,\ldots$. Intuitively, we partition the iterations $k=1,2,\ldots$ into consecutive blocks of length $B$,  and the sequence is $B$-strongly-connected when the graph obtained by unioning the edges within each block is always strongly connected.
When the graph sequence is undirected, we will simply say {\em $B$-connected}.

The \emph{out-neighbors} of node $i$ at iteration $k$ refers to the set of nodes that can receive messages from it,
\[
\Noutk_i = \{ j \mid a_{ji}^k > 0 \},
\]
and similarly, the \emph{in-neighbors} of $i$ at iteration $k$ are the nodes from which $i$ receives messages,
\[
\Nink_i = \{ j \mid a_{ij}^k > 0 \}.
\]
We assume that $i$ is always an neighbor of itself (i.e., the diagonal entries of $A^k$ are non-zero), \ao{which means that we always have $i \in \Noutk_i  $ and $i \in \Nink_i$}.
When the graph is not time-varying, we simply refer to the out-neighbors $\Nout_i$ and in-neighbors $\Nin_i$. When the graph is undirected, the sets of in-neighbors and out-neighbors are identical, so we will simply refer to the \emph{neighbors} $N_i$ of node $i$, or $N_i^k$ in the time-varying setting. The \emph{out-degree} of node $i$ at iteration $k$ is defined as the cardinality of $\Noutk_i$ and is denoted by $\doutk_i = | \Noutk_i |$. Similarly, $\dink_i$, $\dout_i$, $\din_i$, $d_i^k$, and $d_i$ denote the cardinalities, respectively, of \an{the sets} $\Nink_i$, $\Nout_i$, $\Nin_i$, $N_i^k$, and $N_i$.

\section{\color{black} Decentralized Averaging over Undirected Graphs}
\label{sec:consensus}

This section reviews methods for decentralized averaging that will form a key building block in our subsequent discussion of methods for multi-agent optimization.

\subsection{Preliminaries: Results for averaging} \label{sec:averaging}

We begin by examining the linear consensus process defined as 
\begin{equation} \label{scons} x^{k+1} = A^k x^k, \quad k=0,1,\dots\end{equation} where the matrices $A^k \in \R^{n \times n}$ are stochastic, and the initial vector $x^0 \in \R^n$ is given. Various forms of Eq.~(\ref{scons}) can be implemented in a decentralized multi-agent setting, and these form the backbone of many decentralized algorithms. 

For example, consider a collection of nodes interconnected in a directed graph and suppose node $i$ holds the $i$'th coordinate of the vector $x^k$. Consider the following update rule:  at  step $k$, node $i$ broadcasts the value $x_i^k$ to its out-neighbors, receives values $x_j^k$ from its in-neighbors, and sets $x_{i}^{k+1}$ to be the average of the messages it has received, so that \[ x_i^{k+1} = \frac{1}{\dink_i} \sum_{j \in \Nink_i} x_j^k. \] This is sometimes called the {\em equal neighbor iteration}, and by stacking up the variables $x_i^k$ into the vector $x^k$ it can be written in the form of Eq.~(\ref{scons}) with an appropriate choice for the matrix $A^k$.

\ao{Intuitively, we may think of the equal-neighbor updates in terms of an opinion dynamics process over a network wherein node $i$ repeatedly revises it's opinion vector $x_i^k$ by averaging the opinions of it's neighbors. As we will see later, under some relatively mild conditions this process converges to a state where all opinions are identical, explaining why Eq. (\ref{scons}) is usually referred to as the ``consensus iteration.''}

Over undirected graphs, an alternative popular choice of update rule is to set 
\[ x_i^{k+1} = x_i^k + \epsilon \sum_{j \in N_i^k} \ao{\left( x_j^k -  x_i^k \right)},\] where $\epsilon > 0$ is sufficiently small. Unfortunately, finding an appropriate choice of $\epsilon$ to guarantee convergence of this iteration can be bothersome (especially when the graphs are time-varying), and it generally requires knowing an upper bound on the degrees of nodes in the network. 

Another possibility (when the underlying graphs are undirected) is the so-called {\em Metropolis update}
\begin{equation} \label{met} x_i^{k+1} = x_i^k + \sum_{j \in N_i^k} \frac{1}{\max\{d_i^k, d_j^k\}} \left( x_j^k - x_i^k \right). \end{equation}  The Metropolis update requires node $i$ to broadcast both $x_i^k$ and its degree $d_i^k$ to its neighbors.  Observe that the Metropolis update of Eq.~(\ref{met}) can be written in the form of Eq.~(\ref{scons}) where the matrices $A^k$ are doubly stochastic. 

\ao{A variation on this is the so-called {\em lazy Metropolis update},
\begin{equation} \label{lmet} x_i^{k+1} = x_i^k + \sum_{j \in N_i^k} \frac{1}{2 \max\{d_i^k, d_j^k\}} \left( x_j^k - x_i^k \right), \end{equation} with the key difference being the factor of $2$ in the denominator. It is standard convention within the probability literature that such updates are called ``lazy,'' since they move half as much per iteration. As we will see later, the lazy Metropolis iteration possesses a number of attractive convergence properties.}

\ao{As we have alluded to above,} under certain technical conditions, the iteration of Eq. (\ref{scons}) results in \emph{consensus}, meaning that all of the $x_i^k$ (for $i=1,\dots,n$) approach the same value as $k \rightarrow \infty$. We describe one such condition next. The key properties needed to ensure asymptotic consensus are that the matrices $A^k$ should exhibit sufficient connectivity and aperiodicity (in the long term). In the following, we use the shorthand $G^k$ for $G_{A^k}$, the graph corresponding to the matrix $A^k$. The starting point of our analysis is the following assumption. 

\begin{assumption} 
The sequence of directed graphs $G^0, G^1, G^2, \ldots$ is $B$-strongly-connected. 
Moreover, each graph $G^k$ has a self-loop at every node.  
\label{strongconn}
\end{assumption}

As the next theorem shows, a variation on this assumption suffices to ensure that the update of Eq.~(\ref{scons}) converges to consensus. 

\begin{theorem}[\cite{tsitsiklisPhDThesis,jadbabaie03}][\ao{Consensus Convergence over Time-Varying Graphs}]  \label{basicconsthm}
Suppose the sequence of stochastic matrices $A^0, A^1, A^2, \ldots$ has the property that there exists an $\alpha>0$ such that the sequence of graphs $G_{[A^0]_{\alpha}}, G_{[A^1]_{\alpha}}, G_{[A^2]_{\alpha}}, \ldots$ satisfies Assumption~\ref{strongconn}. Then $x(t)$ converges to a limit in ${\rm span}\{{\bf 1}\}$ and the convergence is geometric\footnote{\ao{A sequence of vectors $z(t)$ converges to the limit $z$ {\em geometrically} if $\|z(t) - z\|_2 \leq c \alpha^t$ for some $c \geq 0$, and $0 < \alpha < 1$.}}. Moreover, if all the matrices $A^k$ are doubly stochastic then for all $i=1, \ldots, n$, \[ \lim_{k \rightarrow \infty} x^k = \frac{\sum_{i=1}^n x_i^0}{n}. \]
\end{theorem}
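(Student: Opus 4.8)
The plan is to analyze convergence through the joint evolution of the largest and smallest coordinates and to exhibit a uniform geometric contraction of their gap over time windows of fixed length. Throughout, the dynamics are driven by the actual matrices $A^k$, but I will only ever invoke the entries that survive the thresholding, namely those with $a_{ij}^k \geq \alpha$; since $A^k \geq [A^k]_{\alpha}$ entrywise and all entries are nonnegative, any lower bound derived from the thresholded graphs remains valid for $A^k$.

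First I would record two elementary monotonicity facts. Because each $A^k$ is stochastic, every coordinate of $x^{k+1}$ is a convex combination of the coordinates of $x^k$; hence $M(k) := \max_i x_i^k$ is non-increasing and $m(k) := \min_i x_i^k$ is non-decreasing, so both sequences converge. It therefore suffices to show that the gap $M(k) - m(k)$ tends to $0$ geometrically, since then $M(k)$ and $m(k)$ share a common limit $c$, and the sandwich $m(k) \le x_i^k \le M(k)$ forces $x^k \to c\1 \in {\rm span}\{\1\}$.

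The heart of the argument is the following claim: if $P = A^{s+T-1:s}$ is a product over a window of length $T := (n-1)B$ consisting of $n-1$ consecutive blocks of the $B$-connectivity partition (such windows occur at every multiple of $T$, since $T$ is a multiple of $B$), then $P$ is stochastic and $P_{ij} \geq \alpha^T =: \beta$ for all $i,j$. To prove it I would fix a source node $j$ and track the \emph{influenced set} $S$ of nodes whose current value carries positive weight on $x_j^s$. Positivity can never be destroyed, because all entries are nonnegative, so $S$ only grows, and the self-loops guarantee that once a node enters $S$ it remains. By Assumption~\ref{strongconn}, within each block of $B$ steps the union graph is strongly connected, so as long as $S$ is a proper subset there is an edge (in the information-flow direction) from $S$ to its complement appearing somewhere in the block, which adds at least one new node to $S$. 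Hence after $n-1$ blocks $S$ is all of $\{1,\dots,n\}$, and padding the time-respecting influence walk to length exactly $T$ using self-loops (each weighted at least $\alpha$) yields $P_{ij} \geq \alpha^T$. This spreading/connectivity step, together with pinning down the correct window length, is where I expect the real work to lie; the remaining pieces are bookkeeping.

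Given the claim, I would conclude with a standard coefficient-of-ergodicity estimate: since $P$ is stochastic with all entries at least $\beta$, any two rows overlap in mass at least $n\beta$, so for $y = Px$ one gets $\max_i y_i - \min_i y_i \le (1 - n\beta)\,(\max_i x_i - \min_i x_i)$. Applying this over consecutive aligned windows of length $T$ gives $M(lT) - m(lT) \le (1 - n\beta)^l\,(M(0) - m(0))$, which is the desired geometric decay; note $n\beta \le 1$, so the contraction factor lies in $[0,1)$, and monotonicity of $M$ and $m$ squeezes the gap at the intermediate times as well. Finally, for the doubly stochastic case I would use that $\1^{\top} A^k = \1^{\top}$, so $\1^{\top} x^k = \sum_{i=1}^n x_i^0$ is conserved for every $k$; letting $k \to \infty$ and using $x^k \to c\1$ gives $nc = \sum_{i=1}^n x_i^0$, that is, $c = \frac{1}{n}\sum_{i=1}^n x_i^0$.
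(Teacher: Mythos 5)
Your proof is correct and follows essentially the same route as the paper's: a growing reachable/influenced-set argument (using the self-loops and per-block strong connectivity) shows that products of the $A^k$ over windows of $O(nB)$ steps are entrywise at least $\alpha^{O(nB)}$, and a spread-contraction lemma for entrywise-positive stochastic matrices then gives geometric decay of $\max_i x_i^k - \min_i x_i^k$, with the doubly stochastic claim following from conservation of $\1^T x^k$. Your refinements---the $(n-1)B$ window instead of the paper's $nB$, the sharper contraction factor $1-n\beta$ in place of $1-2\beta$, and the monotone max/min sandwich in place of the paper's convex-hull remark for convergence of the individual coordinates---are all valid but cosmetic.
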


\smallskip

\ao{On an intuitive level, the theorem works by ensuring two things. First, there needs to be an assumption of repeated connectivity in the system over the long-term, and this is what the strong-connectivity condition does. Furthermore, thresholding the weights at some strictly positive $\alpha$ rules out counterexamples where the weights decay to zero with time. Secondly, the assumption that every node has a self-loop rules out a class of counterexamples where the underlying graph is bipartite and the underlying opinions oscillate\footnote{\ao{Indeed, observe that if we do not require that each node has a self-loop,  the dynamics $x^{k+1} = \left( \begin{array}{cc} 0 & 1 \\ 1 & 0 \end{array} \right) x^k$, \an{started at $x^0\ne0$,} would be a counterexample to Theorem 1.}}. Once these potential counterexamples are ruled out, Theorem \ref{basicconsthm} guarantees convergence.} 

We now turn to the proof of this theorem, which while being reasonably short, still builds on a sequence of preliminary lemmas and definitions which we present first. 
Given a sequence of directed graphs $G^0, G^1, G^2, \ldots, $, we say that node $b$ is reachable from node $a$ in time period $k:l$ if there exists a sequence of directed edges $e^k, e^{k-1}, \ldots, e^{l+1}, e^l$, such that: (i) $e^j$ is present in $G^j$ for all $j=l,\dots,k$, (ii) the origin of $e^l$ is $a$, (iii) the destination of $e^k$ is $b$. Note that this is the same as stating that $[W^{k:l}]_{ba} > 0$ if the matrices $W^k$ are nonnegative with $[W^k]_{ij} > 0$ if and only if $(j,i)$ belongs to $G^k$. We use $N^{k:l}(a)$ to denote the set of nodes reachable from node $a$ in time period $k:l$.

The first lemma discusses the implications of Assumption \ref{strongconn} for products of the matrices $A^k$.

\begin{lemma}[\ao{\cite{jadbabaie03,Nedic09a,tsitsiklisPhDThesis}}] Suppose $A^0, A^1, A^2, \ldots$ is a sequence of nonnegative  matrices with the property that there exists $\alpha > 0$ such that the sequence of graphs $G_{[A^0]_{\alpha}}, G_{[A^1]_{\alpha}}, G_{[A^2]_{\alpha}}, \ldots$ satisfies Assumption \ref{strongconn} . Then for any integer $l$, $A^{(l+n)B-1:lB}$ is a strictly positive matrix. In fact, every entry of $A^{(l+n)B-1:lB}$ is at least  $\alpha^{nB}$. \label{reachable}
\end{lemma}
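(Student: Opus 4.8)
The plan is to reduce the claim to a statement about \emph{reachability} in the thresholded graph sequence, and then to show that within $n$ blocks every node becomes reachable from every other, using a cut argument that exploits the self-loops. First I would record the monotonicity that lets me pass to the thresholded matrices. Writing $W^k = [A^k]_{\alpha}$, we have $A^k \ge W^k \ge 0$ entrywise, and since products of nonnegative matrices preserve entrywise inequalities, $A^{(l+n)B-1:lB} \ge W^{(l+n)B-1:lB} \ge 0$ entrywise. Hence it suffices to prove the lower bound $\alpha^{nB}$ for the product $W^{(l+n)B-1:lB}$. By construction every nonzero entry of $W^k$ is at least $\alpha$, and by Assumption~\ref{strongconn} each $W^k$ satisfies $[W^k]_{ii} \ge \alpha$ (the self-loops) while the sequence $G_{W^0}, G_{W^1}, \ldots$ is $B$-strongly-connected.

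The heart of the argument is to show that for every pair of nodes $a, b$ we have $b \in N^{(l+n)B-1:lB}(a)$. Let $R_m$ denote the set of nodes reachable from $a$ in the period $(l+m)B-1 : lB$ spanning the first $m$ blocks, with $R_0 = \{a\}$. The key claim is that the reachable set strictly grows with each block until it saturates: if $R_m \ne \{1,\ldots,n\}$ then $|R_{m+1}| \ge |R_m| + 1$. To see this, consider the $(m{+}1)$-st block of $B$ iterations; by $B$-strong-connectivity its union graph is strongly connected, so for the nonempty proper subset $R_m$ there must be an edge from some $u \in R_m$ to some $v \notin R_m$ present at some time $k^{*}$ within the block. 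Since $u$ is reachable by the end of block $m$, I extend that walk by using the self-loop at $u$ to wait until time $k^{*}$, then traverse the edge $u \to v$, then use the self-loop at $v$ to wait out the remainder of the block; this places $v$ in $R_{m+1}$. The same self-loops keep every node of $R_m$ reachable, so $R_{m+1} \supseteq R_m \cup \{v\}$ and the claim follows. Starting from $|R_0| = 1$ and adding at least one node per block, the set saturates after at most $n-1$ blocks, so certainly $R_n = \{1,\ldots,n\}$, and every $b$ is reachable from every $a$ over the full period.

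Finally I would convert reachability into the entrywise bound. Reachability of $b$ from $a$ over $(l+n)B-1:lB$ yields a walk $a = v_{lB}, v_{lB+1}, \ldots, v_{(l+n)B} = b$ in which $(v_j, v_{j+1})$ is an edge of $G_{W^j}$ for each $j = lB, \ldots, (l+n)B-1$; padding with self-loops as above makes this walk use exactly one edge at every one of the $nB$ time indices. The corresponding product of entries $[W^{(l+n)B-1}]_{v_{(l+n)B}\, v_{(l+n)B-1}} \cdots [W^{lB}]_{v_{lB+1}\, v_{lB}}$ is a single nonnegative term in the sum defining $[W^{(l+n)B-1:lB}]_{ba}$, and each of its $nB$ factors is positive, hence at least $\alpha$. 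Therefore $[W^{(l+n)B-1:lB}]_{ba} \ge \alpha^{nB}$, and combined with the entrywise domination from the first step, every entry of $A^{(l+n)B-1:lB}$ is at least $\alpha^{nB}$.

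The main obstacle is the inductive cut argument of the second paragraph. Its only subtlety is correctly coupling the ``edge across the cut $R_m$'' supplied by strong connectivity of the block's union graph with the self-loops that are needed both to \emph{wait} for that edge to appear at its particular time $k^{*}$ and to \emph{preserve} reachability of the nodes already reached. Everything else---the matrix domination and reading off the weight of a single path---is routine.
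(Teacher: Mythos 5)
Your proposal is correct and follows essentially the same route as the paper's own proof: self-loops give monotonicity of the reachable set, strong connectivity of each block's union graph forces the reachable set to grow by at least one node per block until it saturates within $n$ blocks, and thresholding converts the resulting length-$nB$ walk into the entrywise bound $\alpha^{nB}$. Your write-up merely makes explicit two steps the paper leaves implicit (the entrywise domination $A^k \ge [A^k]_{\alpha}$ and the padding of walks with self-loops), which is fine.
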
 

\ao{The proof, given next, is a mathematical formalization of the observation that sufficiently long paths exist between any two nodes.}

\begin{proof} 
Consider the set of nodes reachable from node $i$ in time period $k_{\rm start}$ to $k_{\rm finish}$ in the graph sequence $G_{[A^0]_{\alpha}}, G_{[A^1]_{\alpha}}, G_{[A^2]_{\alpha}}, \ldots$, \an{and denote this set by $N^{k_{\rm finish}:k_{\rm start}}(i)$}. Since each of these graphs has a self-loop at every node by Assumption \ref{strongconn}, the reachable set \an{can only be enlarged}, i.e., $$N^{k_{\rm finish}:k_{\rm start}}(i) \subseteq N^{k_{\rm finish}+1:k_{\rm start}}(i) \mbox{ for all } i, k_{\rm start}, k_{\rm finish}.$$ A further immediate consequence of Assumption \ref{strongconn} is that if $N^{mB-1:lB}(i) \neq \{1, \ldots, n\}$, then $N^{(m+1)B-1:lB}(i)$ is strictly larger than $N^{mB-1:lB}(i)$, because during times $(m+1)B-1:mB$ there is an edge in some $[G^i]_{\alpha}$ leading from the set of nodes already reachable from $i$ to those not already reachable from $i$. Putting together these two properties, we obtain that from $lB$ to $(l+n)B-1$ every node is reachable, i.e.,  $$N^{(l+n)B-1:lB}(i) = \{1, \ldots, n\}.$$ But since every non-zero entry of $[A^k]_{\alpha}$ is at least $\alpha$ by construction, this implies that $A^{(l+n)B-1:lB} \geq \alpha^{nB}$, and the lemma is proved.
\end{proof}

Lemma~\ref{reachable} tells us that, over sufficiently long horizons, the products of the matrices $A^k$ have entries bounded away from zero. The next lemma discusses what multiplication by such a matrix does to the \emph{spread} \an{of the values in a vector.} 

\begin{lemma}  \label{contractionlemma}
Suppose $W$ is a stochastic matrix, every entry of which is at least $\beta > 0$. If $v = W u$ then 
\[ \max_{i=1, \ldots, n} v_i - \min_{i=1, \ldots, n} v_i \leq (1- 2\beta) \left( \max_{i=1, \ldots, n} u_i - \min_{i=1, \ldots, n} u_i \right). \]
\end{lemma}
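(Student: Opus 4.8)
The plan is to prove the one-step contraction by comparing rows of $W$ against each other. First I would observe that, because $W$ is stochastic, each coordinate $v_i = \sum_j w_{ij} u_j$ is a convex combination of the entries of $u$, so that $\min_j u_j \le v_i \le \max_j u_j$ for every $i$. Writing $M = \max_j u_j$ and $m = \min_j u_j$, it therefore suffices to bound $v_p - v_q$ for an arbitrary pair of indices $p,q$ and then specialize to the indices achieving the maximum and minimum of $v$.

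For fixed $p,q$ I would write $v_p - v_q = \sum_j (w_{pj} - w_{qj}) u_j$ and split the sum according to the sign of the coefficient $w_{pj} - w_{qj}$. On the indices where the coefficient is nonnegative I would upper bound $u_j$ by $M$, and on the indices where it is negative I would lower bound $u_j$ by $m$. The key bookkeeping step uses stochasticity: since each row of $W$ sums to one, $\sum_j (w_{pj} - w_{qj}) = 0$, so the total positive mass equals the total negative mass; call this common value $\sigma \ge 0$. This yields $v_p - v_q \le \sigma M - \sigma m = \sigma(M-m)$.

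It remains to show $\sigma \le 1 - 2\beta$, and this is the step I expect to be the heart of the argument. I would identify $\sigma$ with the ``non-overlapping'' mass between the two rows, namely $\sigma = 1 - \sum_j \min(w_{pj}, w_{qj})$, which follows by rewriting the positive part as $\sum_{j : w_{pj} \ge w_{qj}} w_{pj} = 1 - \sum_{j : w_{pj} < w_{qj}} w_{pj}$ and collecting terms. Since every entry of $W$ is at least $\beta$, each $\min(w_{pj}, w_{qj}) \ge \beta$, so $\sum_j \min(w_{pj}, w_{qj}) \ge n\beta \ge 2\beta$ (the case $n=1$ being trivial since the spread is then zero, and note $n\beta \le 1$ forces $1 - 2\beta \ge 0$). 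Hence $\sigma \le 1 - 2\beta$.

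Combining the two bounds gives $v_p - v_q \le (1 - 2\beta)(M - m)$ for every $p,q$; choosing $p$ and $q$ to be the maximizing and minimizing indices of $v$ then yields the claimed estimate on $\max_i v_i - \min_i v_i$. The only subtlety worth checking carefully is that the sign-based splitting and the stochasticity identity combine correctly even when one of the two index sets is empty, in which case every coefficient vanishes, $\sigma = 0$, and the inequality holds trivially.
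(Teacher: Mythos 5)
Your proof is correct, but it takes a genuinely different route from the paper's. The paper argues coordinatewise against the extremes of $u$: assuming without loss of generality that $u_1$ is the largest and $u_n$ the smallest entry, it observes that every coordinate satisfies $v_l \le (1-\beta)u_1 + \beta u_n$ and $v_l \ge \beta u_1 + (1-\beta)u_n$ (since at least mass $\beta$ must sit on the smallest, respectively largest, entry of $u$), and subtracting these two one-line bounds immediately yields the factor $1-2\beta$. You instead compare two rows of $W$ directly, splitting $v_p - v_q = \sum_j (w_{pj}-w_{qj})u_j$ by the sign of the coefficients and identifying the common positive and negative mass $\sigma$ with $1 - \sum_j \min(w_{pj},w_{qj})$; this is the classical Dobrushin ergodicity-coefficient argument, and your algebra (including the identity for $\sigma$ and the degenerate cases where a sign class is empty or $n=1$) checks out. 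What your route buys: a strictly sharper contraction factor $1 - n\beta \le 1 - 2\beta$, and a bound that only needs pairwise row overlap $\sum_j \min(w_{pj},w_{qj}) \ge 2\beta$ rather than every entry of $W$ being at least $\beta$, so it generalizes further. What the paper's route buys: brevity and transparency---two coordinate bounds and a subtraction, with no sign-splitting or bookkeeping---which is all that is needed since the lemma is only invoked with the crude constant $\beta = \alpha^{nB}$ anyway.
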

\begin{proof} Without loss of generality, let us assume that the largest entry of $u$ is $u_1$ and the smallest entry of $u$ is $u_n$. Then,  for $l \in \{1, \ldots, n\}$, 
\begin{eqnarray*}
v_l & \leq & (1-\beta) u_1 + \beta u_n, \\
 v_l & \geq & \beta u_1 + (1-\beta) u_n, 
\end{eqnarray*}  so that for any $a,b \in \{1, \ldots, n\}$, we have 
\begin{align*} 
v_a - v_b & \leq (1-\beta) u_1 + \beta u_n - \left( \beta u_1 + (1-\beta) u_n \right) \\ 
& = (1-2 \beta) u_1 - (1 - 2 \beta) u_n. 
\end{align*}
\end{proof}

With these two lemmas in place, we are ready to prove Theorem~\ref{basicconsthm}. \ao{Our strategy is to apply Lemma \ref{contractionlemma} repeatedly to show that the spread of the underlying vectors keeps getting smaller. }

\begin{proof}[Proof of Theorem \ref{basicconsthm}] Since we have assumed that $G_{[A^0]_{\alpha}}, G_{[A^1]_{\alpha}}, G_{[A^2]_{\alpha}}, \ldots$ satisfy Assumption \ref{strongconn}, by Lemma \ref{reachable}, we have that 
\[ \left[ A^{lB:(l+n)B-1} \right]_{ij} \geq \alpha^{nB}  \] 
for all $l=0,1,2,\ldots$, and $i,j = 1, \ldots, n$.
Applying Lemma \ref{contractionlemma} gives that 
\begin{align*}
\max_{i=1, \ldots, n} &x_i^{(l+n)B} - \min_{i=1, \ldots, n} x_i^{(l+n)B} \\
&\leq \left( 1 - 2 \alpha^{nB} \right) \left( \max_{i=1, \ldots, n} x_i^{lB} - \min_{i=1, \ldots, n} x_i^{lB} \right). 
\end{align*}
Applying this recursively, we obtain that $|x_a^k -x_b^k| \rightarrow 0$ for all $a,b \in \{1, \ldots, n\}$. 

To obtain further that every $x_i^k$ converges, it suffices to observe that $x_i^k$ lies in the convex hull of the vectors $x^t$ for $t \leq k$. Finally, since each $A^k$ is doubly stochastic, 
\[ \sum_{j=1,\ldots,n} x_j^{k+1} = \1^T x^{k+1} = \1^T A^k x^k = \1^T x^k = \sum_{j=1, \ldots, n} x_j^k, \] where $\1$ denotes a vector with all entries equal to one, and thus all $x_i^k$ must converge to the initial average.
\end{proof} 

A potential shortcoming of the proof of Theorem~\ref{basicconsthm} is that the convergence time bounds it leads to tend to scale poorly in terms of the number of nodes $n$. 
We can overcome this shortcoming as illustrated in the following propositions. 
These results apply to a much narrower class of scenarios, but they tend to provide more effective bounds when they are applicable. 

The first step is to introduce a precise notion of convergence time. Let $T \left(n,\epsilon, \{A^0, A^2, \ldots, \} \right)$ denote the first time $k$ when 
\[ \frac{\left\| x^{k} - \frac{\sum_{i=1}^n x_i^0}{n} \1 \right\| }{\left\| x^{0} - \frac{\sum_{i=1}^n x_i^0}{n} \1 \right\|} \leq \epsilon. \] In other words, the convergence time is defined as the time until the deviation from the mean shrinks by a factor of $\epsilon$. The convergence time is a function of the desired accuracy $\epsilon$ and of the underlying sequence of matrices. In particular, we emphasize the dependence on the number of nodes, $n$. When the sequence of matrices is clear from context, we will simply write $T(n,\epsilon)$.

\begin{proposition} 
Suppose \[ x^{k+1} = A^k x^k \] where each $A^k$ is a doubly stochastic matrix.  Then 
\[ \left\|  x^{k} - \frac{\sum_{i=1}^n x_i^0}{n} \1 \right\|_2 
\leq \left( \sup_{l=0,1,2,\ldots} \sigma_{2} \left( A^l \right) \right)^k 
\left\| x^{0} - \frac{\sum_{i=1}^n x_i^0}{n} \1 \right\|_2, \]
where $\sigma_2(A^l)$ denotes the second-largest singular value of the matrix $A^l$.  \label{secsing}
\end{proposition}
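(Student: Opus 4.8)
The plan is to track the deviation of the iterate from the (conserved) average and show it contracts by a factor $\sigma_2(A^k)$ at each step. Write $\bar{x} = \frac{1}{n}\sum_{i=1}^n x_i^0$ and introduce the error vector $e^k = x^k - \bar{x}\1$. Because each $A^k$ is doubly stochastic, it satisfies both $A^k \1 = \1$ and $\1^T A^k = \1^T$. The second identity gives $\1^T x^{k+1} = \1^T A^k x^k = \1^T x^k$, so the average is conserved and $\1^T e^k = 0$ for every $k$; the first identity gives $A^k(\bar{x}\1) = \bar{x}\1$, so subtracting yields $e^{k+1} = A^k x^k - \bar{x}\1 = A^k e^k$. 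Thus the error obeys the same linear recursion as $x^k$ and remains orthogonal to $\1$ for all $k$.

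The heart of the argument is a spectral fact that I would establish next: for a doubly stochastic matrix $A$ and any vector $v$ orthogonal to $\1$, one has $\|Av\|_2 \leq \sigma_2(A)\,\|v\|_2$. To see this, note that $A^TA$ is symmetric positive semidefinite, and double stochasticity gives $A^TA\,\1 = A^T \1 = \1$, so $\1$ is an eigenvector of $A^TA$ with eigenvalue $1$. Moreover $\|A\|_2 \leq 1$ for a doubly stochastic matrix, since its nonnegative rows and columns each sum to one, making it nonexpansive in both $\ell_1$ and $\ell_\infty$ and hence (by interpolation) in $\ell_2$; therefore the eigenvalue $1 = \sigma_1(A)^2$ of $A^TA$ is in fact its largest. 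Diagonalizing $A^TA$ in an orthonormal eigenbasis that includes $\1/\sqrt{n}$, the complement $\1^\perp$ is an invariant subspace spanned by the remaining eigenvectors, whose eigenvalues are exactly $\sigma_2(A)^2 \geq \cdots \geq \sigma_n(A)^2$. Consequently $\max_{v \perp \1,\, v \neq 0} \frac{v^T A^T A v}{v^T v} = \sigma_2(A)^2$, which is the claimed one-step bound.

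Combining the two pieces finishes the proof. Since $e^k \perp \1$ and $e^{k+1} = A^k e^k$, the spectral bound applied with $A = A^k$ gives $\|e^{k+1}\|_2 \leq \sigma_2(A^k)\,\|e^k\|_2 \leq \big(\sup_{l} \sigma_2(A^l)\big)\|e^k\|_2$. Iterating from $e^0 = x^0 - \bar{x}\1$ yields $\|e^k\|_2 \leq \big(\sup_{l}\sigma_2(A^l)\big)^k \|e^0\|_2$, which is precisely the stated inequality.

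I expect the main obstacle to be the spectral fact, and specifically the claim that the restriction of $A^TA$ to $\1^\perp$ has top eigenvalue exactly $\sigma_2(A)^2$. This is where double stochasticity is indispensable: it guarantees that $\1$ is not merely a right eigenvector of $A$ but a \emph{top} right singular vector of $A$ (an eigenvector of $A^TA$ for its maximal eigenvalue), so that removing the $\1$ direction leaves precisely the remaining squared singular values. One must take some care when $\sigma_1(A) = 1$ has multiplicity greater than one, but choosing an orthonormal eigenbasis containing $\1/\sqrt{n}$ handles this uniformly.
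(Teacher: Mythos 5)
Your proof is correct, and it is precisely the argument the paper has in mind: the paper actually skips this proof entirely, remarking only that it ``follows quickly from the definition of singular value,'' which is exactly your observation that $e^k = x^k - \bar{x}\1$ satisfies $e^{k+1} = A^k e^k$, stays orthogonal to $\1$, and that $\1$ is a top singular vector of a doubly stochastic matrix so the orthogonal complement contracts by $\sigma_2(A^k)$. Your write-up (including the $\|A\|_2 \le 1$ step via the $\ell_1$/$\ell_\infty$ bounds and the care about multiplicity of the top singular value) is a valid and complete filling-in of that skipped argument.
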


We skip the proof, which follows quickly from the definition of singular value. 

We adopt the slightly non-standard notation 
\begin{equation} \label{lambdadef} 
\lambda =  \sup_{l \geq 0} \sigma_{2} \left( A^l \right), \end{equation} 
so that the previous proposition can be conveniently restated as 
\begin{equation} \label{lambd}  
\left \| x^{k} - \frac{\sum_{i=1}^n x_i^0}{n} \1 \right\| 
\leq \lambda^k \left\| x^{0} - \frac{\sum_{i=1}^n x_i^0}{n} \1 \right\|. \end{equation} 
Recalling that $\log(1/\lambda) \le 1/(1-\lambda)$, a consequence of this equation is that
\begin{equation} \label{tlambd} 
T\left(n,\epsilon, \{A^0, A^1, \ldots, \} \right) = O \left( \frac{1}{1-\lambda} \ln \frac{1}{\epsilon} \right), \end{equation} so the number $\lambda$ provides an upper bound on the convergence rate of decentralized averaging.

In general, there is no guarantee that $\lambda < 1$, and the equations we have derived may be vacuous. Fortunately, it turns out that for the lazy Metropolis matrices on connected graphs, it is true that $\lambda < 1$, and furthermore, for many families of undirected graphs it is possible to give order-accurate estimates on $\lambda$, which translate into estimates of convergence time. This is captured in the following proposition. Note that all of these bounds should be interpreted as \emph{scaling laws}, explaining how the convergence time increases as the network size $n$ increases, when the graphs all come from the same family.

\begin{proposition}[\ao{Network Scaling for Average Consensus via the Lazy Metropolis Iteration}] \label{diff-graphs} 
If each $A^k$ is the lazy Metropolis matrix on the ...
\begin{enumerate} \item ...path graph, then $T(n,\epsilon) = O\left( n^2 \log(1/\epsilon) \right)$. 
\item ...$2$-dimensional grid, then $T(n,\epsilon) = O \left( n \log n \log (1/\epsilon) \right)$. 
\item ...$2$-dimensional torus, then $T(n,\epsilon) = O\left( n \log(1/\epsilon) \right)$.
\item ...$k$-dimensional torus, then $T(n, \epsilon) = O\left( n^{2/k} \log(1/\epsilon) \right).$ 
\item ...star graph, then $T(n, \epsilon) = O\left(n^2 \log (1/\epsilon) \right)$. 
\item ...two-star graphs\footnote{A two-star graph is composed of two star graphs with a link connecting their centers.}, then $T(n, \epsilon) = O\left(n^2 \log (1/\epsilon) \right)$. 
\item ...complete graph, then $T(n,\epsilon) = O(1)$.
\item ...expander graph, then $T(n,\epsilon) = O( \log (1/\epsilon))$.
\item ...Erd\H{o}s-R\'{e}nyi random graph\footnote{An Erd\H{o}s-R\'{e}nyi random graph with $n$ nodes and parameter $p$ has a symmetric adjacency matrix $A$ whose $\binom{n}{2}$ distinct off-diagonal entries are independent Bernoulli random variables taking the value 1 with probability $p$. In this article we focus on the case where $p = (1+\varepsilon) \log(n)/n$, where $\varepsilon > 0$, for which it is known that the random graph is connected with high probability~\cite{durett}.} then with high probability\footnote{A statement is said to hold ``with high probability'' if the probability of it holding approaches $1$ as $n \rightarrow \infty$. In this context, $n$ is the number of nodes of the underlying graph.} $T(n, \epsilon) = O (\log (1/\epsilon))$. 
\item ...geometric random graph\footnote{A geometric random graph is one where $n$ nodes are placed uniformly and independently in the unit square $[0,1]^2$ and two nodes are connected with an edge if their distance is at most $r_n$. In this article we focus on the case where $r_n^2 = (1 +\varepsilon) \log(n) /n$ for some $\varepsilon > 0$, for which it is known that the random graph is connected with high probability~\cite{PenroseRGGbook}.}, then with high probability $T(n, \epsilon) = O( n \log n \log (1/\epsilon))$.
\item ...any connected undirected graph, then $T(n, \epsilon) = O\left(n^2 \log(1/\epsilon)\right).$
\end{enumerate} 
\end{proposition}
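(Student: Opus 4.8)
My plan is to collapse all ten items to a single quantity---the spectral gap of the lazy Metropolis matrix---and then estimate that gap family by family. Since the iteration here is time-invariant, every $A^k$ equals the same lazy Metropolis matrix $A = \frac{1}{2}(I + M)$, where $M$ is the Metropolis matrix of~\eqref{met}. This $A$ is symmetric and doubly stochastic, so its singular values are the absolute values of its eigenvalues; crucially, the lazy term $\frac{1}{2}I$ forces every eigenvalue into $[0,1]$, so that $\lambda = \sigma_2(A) = \lambda_2(A)$, the second-largest eigenvalue. Proposition~\ref{secsing} together with~\eqref{tlambd} then yields $T(n,\epsilon) = O\!\left(\frac{1}{1-\lambda_2(A)}\log\frac{1}{\epsilon}\right)$, reducing each item to a lower bound on the spectral gap $1-\lambda_2(A)$. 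I would pause here to note that laziness is exactly what makes this reduction useful: without it, $\sigma_2(A)$ could be governed by an eigenvalue near $-1$ (as on bipartite-like graphs), and~\eqref{lambd} would be vacuous.

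For the highly symmetric families I would compute the gap more or less explicitly. The lazy Metropolis matrix on the path is tridiagonal with constant interior weights, and its eigenvalues are explicit cosines, giving $1-\lambda_2 = \Theta(1/n^2)$ and hence item~1. The $k$-dimensional grid is a Cartesian product of paths, so up to the boundary corrections introduced by the degree-dependent Metropolis weights its spectrum decomposes coordinate-wise and is governed by a single axis of side length $n^{1/k}$; this produces the grid bounds of items~2 and~3. For the complete graph an elementary calculation gives $1-\lambda_2 = \Theta(1)$, so the convergence time is independent of $n$ (item~6).

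For the remaining families I would route the estimate through the conductance (bottleneck ratio) $\Phi$. Because the lazy Metropolis chain is reversible with uniform stationary distribution, Cheeger's inequality sandwiches $\tfrac{1}{2}\Phi^2 \le 1-\lambda_2 \le 2\Phi$. Expander graphs have $\Phi = \Theta(1)$ by definition, giving item~7; the single star has $\Phi = \Theta(1/n)$ via the cut separating half its leaves, and the (lossy) Cheeger lower bound $1-\lambda_2 \ge \tfrac{1}{2}\Phi^2$ then yields the stated $O(n^2\log(1/\epsilon))$ of item~4. The two random-graph items are where the work becomes probabilistic: at the prescribed edge probability and radius one must show, with high probability, both that the degrees concentrate (so the Metropolis weights are pinned down) and that $\Phi$ is controlled---by a constant in the Erd\H{o}s--R\'enyi case (item~8), and on the scale of a grid of mesh $r_n$ in the geometric case (item~9).

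The last ingredient is the bound $1-\lambda_2 = \Omega(1/n^2)$ for an \emph{arbitrary} connected graph (item~10), which also subsumes the two-star (item~5). I would obtain this with a Poincar\'e / canonical-paths argument: send a unit of flow between each pair of nodes along a shortest path, use that every lazy Metropolis edge weight is at least $1/(2n)$ since no degree exceeds $n-1$, and bound the resulting congestion. I expect this to be the main obstacle. A crude count---number of pairs crossing an edge times maximum path length---only gives $O(n^3)$; recovering the correct $O(n^2)$ requires exploiting the trade-off built into the Metropolis rule, namely that an edge can carry a very small weight only when its endpoints have large degree, which in turn forces the shortest paths crossing it to be short. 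Pinning down this trade-off uniformly over all connected graphs, together with the high-probability conductance estimates for the two random-graph families, are the two genuinely non-routine parts of the argument; the explicit-spectrum and Cheeger steps are then bookkeeping.
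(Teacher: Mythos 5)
Most of your proposal is sound and takes a genuinely different route from the paper. The paper's proof sketch does not estimate the spectral gap family-by-family at all: it invokes the single inequality $1/(1-\lambda) = O({\cal H})$, where ${\cal H}$ is the largest hitting time of the lazy Metropolis chain (a consequence of the eigentime identity for reversible chains, cited from \cite{lincons2}), and then quotes standard hitting-time bounds for each graph family. Your mix of explicit spectra (path, grid, complete) and Cheeger/conductance arguments (expander, star, random graphs) does recover items 1, 2, 4, 5, 6, 7, 8, 9, sometimes with sharper exponents, modulo two mismatches that are the paper's looseness rather than your error: the lazy Metropolis matrix on the complete graph is $(I+M)/2$ with $M = (\1\1^T - I)/(n-1)$, not $\1\1^T/n$, so one gets $O(\log(1/\epsilon))$ rather than literally $O(1)$ in item 6; and the exponent printed in item 3 is inconsistent with item 2 at $k=2$, so neither your route nor the paper's yields it exactly as stated.

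The genuine gap is item 10, precisely the step you flag as the crux. The mechanism you propose --- a small Metropolis edge weight forces large endpoint degree, which in turn forces the shortest paths crossing that edge to be short --- is false, and the failure cannot be repaired by a cleverer choice of paths or flows. Take the graph consisting of a clique on $n/2$ vertices together with a pendant path of length $n/2$ attached at a clique vertex $v_1$, and let $e=(v_1,u_1)$ be the edge joining the clique to the path. Its lazy Metropolis weight is $1/(2\cdot(n/2)) = 1/n$ (small, since $v_1$ has degree $n/2$), yet shortest paths crossing $e$ have length up to $n/2$. Moreover $e$ is a cut edge, so \emph{every} unit of flow between the two sides must use it; since $Q(e) = \pi(v_1)P(v_1,u_1) = \Theta(1/n^2)$ and $\sum \pi(x)\pi(y)\,\mathrm{dist}(x,y) = \Omega(n)$ over the crossing pairs, the length-weighted Poincar\'{e} congestion of $e$ is $\Omega(n^3)$ for any choice of paths or flows, so the method can certify only a gap of $\Omega(1/n^3)$. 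The true gap of this graph is $\Theta(1/n^2)$: the test function that is zero on the clique and linear along the path gives the upper bound, and the $O(n^2)$ maximal hitting time gives the lower bound. So the Poincar\'{e}/canonical-paths framework is intrinsically lossy by a factor of $n$ here, and your item 10 (which was also meant to subsume nothing less than the worst case) cannot be completed this way. This is exactly why the paper routes the whole proposition through hitting times, using the result of \cite{lincons1,lincons2} that the lazy Metropolis walk on any connected $n$-node graph satisfies ${\cal H} = O(n^2)$. (Your canonical-paths computation does succeed on the two-star itself, item 5, where all crossing paths have length at most $3$; it is the general connected graph that breaks it.)
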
 

\begin{proof}[Sketch of the proof] 
The spectral gap $1/(1-\lambda)$ can be bounded as  $O({\cal H})$ where ${\cal H}$ is the largest hitting time of the Markov chain whose probability transition matrix is the lazy Metropolis matrix~(\ao{Lemma 2.13 of} \cite{lincons2}). We thus only need to bound hitting times on the graphs in question, and these are now standard exercises. For example, the fact that the hitting time on the path graph is quadratic in the number of nodes is essentially the main finding of the standard ``gambler's ruin'' exercise \ao{(see e.g., Proposition 2.1 of \cite{lpw})}. \ao{The result for the $2$-d grid follows from putting together Theorem 2.1 and Theorem 6.1 of \cite{commute}}. For corresponding results on $2$-d and $k$-dimensional tori, please see \ao{Theorem 5.5 of} \cite{lpw}; \ao{note that we are treating $k$ as a fixed number and looking at the scaling as a function of the number of nodes $n$.} Hitting times on star, two-star, and complete graphs are elementary exercises. The result for an expander graph is a consequence of Cheeger's inequality; see Theorem~6.2.1 in \cite{durett}. For Erd\H{o}s-R\'{e}nyi graphs the result  follows because such graphs are expanders with high probability; see \ao{the discussion on page 170} of \cite{durett}. For geometric random graphs a bound can be obtained by partitioning the unit square into appropriately-sized regions, thereby reducing to the case of a $2$-d grid; see \ao{Theorem 1.1 of} \cite{avin}. Finally the bound for connected graphs is from \ao{Lemma 2.2 of \cite{lincons1}}.
\end{proof} 

Fig.~\ref{graphs} depicts examples of some of the graphs discussed in Proposition~\ref{diff-graphs}. Clearly the network structure affects the time it takes information to diffuse across the graph. For graphs such as the path or 2-d torus, the dependence on $n$ is intuitively related to the long time it takes information to spread across the network. For other graphs, such as stars, the dependence is due to the central node (i.e., the ``hub'') becoming a bottleneck. For such graphs this dependence is strongly related to the fact that we have focused on the Metropolis scheme for designing the entries of the matrices $A^k$. Because the hub has a much higher degree than the other nodes, the resulting Metropolis updates lead to very small changes and hence slower convergence (i.e., $A^k$ is diagonally dominant); see Eq.~\eqref{met}. In general, for undirected graphs in which neighboring nodes may have very different degrees, it is known that faster rates can be achieved by using linear iterations of the form Eq.~\eqref{scons}, where $A^k$ is optimized for the particular graph topology~\cite{Xiao2004,Boyd2006randomized}. However, unlike using the Metropolis weights---which can be implemented locally by having neighboring nodes exchange their degrees---determining the optimal matrices $A^k$ involves solving a separate network-wide optimization problem; see \cite{Boyd2006randomized} for a decentralized approximation algorithm.

\begin{figure*}
\centering
\subfigure[][]{\begin{tikzpicture}[every node/.style={circle,draw,fill=black!50},thick]
	\foreach \i in {1,...,5}{
		\node (\i) at (\i, 0) {};
	}
	\path[] (1) edge (2)
			(2) edge (3)
			(3) edge (4)
			(4) edge (5);
\end{tikzpicture}} \hspace{1em}
\subfigure[][]{\begin{tikzpicture}[every node/.style={circle,draw,fill=black!50},thick]
	\foreach \i in {1,...,4}{
		\foreach \j in {1,...,4}{
			\node (\i-\j) at (\i, \j) {};
		}
	}
	\foreach \i in {1,...,4}{
		\path[]
		\foreach \j/\k in {1/2,2/3,3/4}{
			(\i-\j) edge (\i-\k)
		};
	}
	\foreach \i in {1,...,4}{
		\path[]
		\foreach \j/\k in {1/2,2/3,3/4}{
			(\j-\i) edge (\k-\i)
		};
	}
\end{tikzpicture}} \hspace{1em}
\subfigure[][]{\begin{tikzpicture}[every node/.style={circle,draw,fill=black!50},thick]
	\def \r {1cm}
	\node (1) at (0,0) {};
	\foreach \i in {2,...,6}{
		\node (\i) at ({(\i-1)/5*360 + 90}:\r) {};
	}	
	\path[]
	\foreach \i in {2,...,6}{
		(1) edge (\i)
	};
\end{tikzpicture}} \hspace{1em}
\subfigure[][]{\begin{tikzpicture}[every node/.style={circle,draw,fill=black!50},thick]
	\def \r {1cm}
	\node (1) at (0,0) {};
	\foreach \i in {2,...,6}{
		\node (\i) at ({(\i-1)/5*360 + 90}:\r) {};
	}	
	\path[]
	\foreach \i in {2,...,6}{
		(1) edge (\i)
	};
	\node (7) at (3,0) {};
	\foreach \i in {8,...,12}{
		\path (7) ++ ({(\i-1)/5*360 + 90}:\r) node (\i) {};
	}	
	\path[]
	\foreach \i in {8,...,12}{
		(7) edge (\i)
	};
	\path (1) edge (7);
\end{tikzpicture}}
\caption{Examples of some graph families mentioned in Prop.~\ref{diff-graphs}. (a) Path graph with $n=5$. (b) $2$-d grid with $n=16$. (c) Star graph with $n=6$. (d) Two-star graph with $n=12$.}
\label{graphs}
\end{figure*}
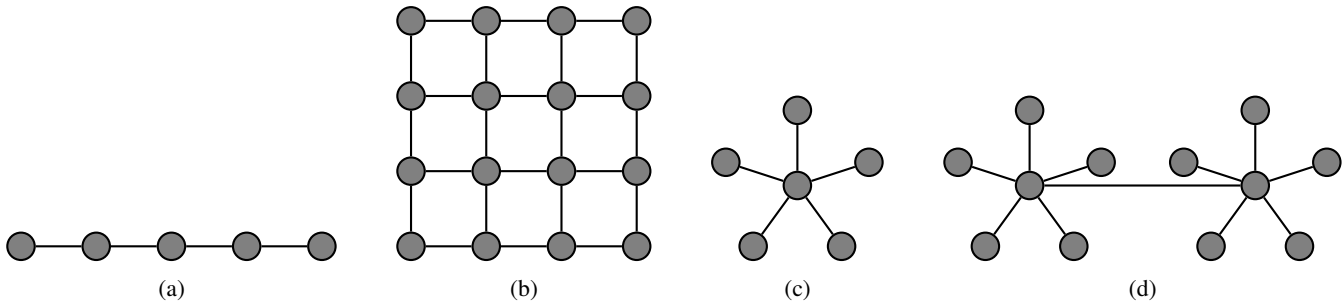

On the other hand, the algorithm is evidently fast on certain graphs. For the complete graph (where every node is directly connected to every other node, this is not surprising---since $A = (1/n) \1 \1^T$, the average is computed exactly at every other node after a single iteration. Expander graphs can be seen as sparse approximations of the complete graph (sparse here is in the sense of having many fewer edges) which approximately preserve the spectrum, and hence the hitting time~\cite{Spielman2017}. In applications where one has the option to design the network, expanders are particularly of practical interest since they allow fast rates of convergence---hence, few iterations---while also having relatively few links---so each iteration requires few transmissions and is thus fast to implement \cite{olfati2007algebraic,kar2008topology}.

\subsection{Worst-case scaling of decentralized averaging} 

One might wonder about the {\em worst-case} complexity of average consensus: how long does it take to get close to the average on any graph? Initial bounds were exponential in the number of nodes~\cite{tsitsiklis1986decentralized,tsitsiklisPhDThesis,bertsekasParallel,jadbabaie03}. However, Proposition~\ref{diff-graphs} tells us that this is at most $O(n^2)$ using a Metropolis matrix. A recent result~\cite{lincons2} shows that if all the nodes know an upper bound $U$ on the total number of nodes which is reasonably accurate, this convergence time can be brought down by an order of magnitude. 
This is a consequence of the following theorem.

\begin{theorem}[\cite{lincons1,lincons2}][\ao{Linear\footnote{\ao{Note that we do not adhere to the common convention of using ``linear convergence'' as a synonym for ``geometric convergece''; rather, ``linear time'' convergence in this paper refers to a convergence time which scales as $O(n)$ in terms of the number of nodes $n$.}} Time Convergence for Consensus}] \label{thm:accelerate}
Suppose each node in an undirected connected graph $G$ implements the update
\begin{align} 
w_i^{k+1}  & =  u_i^k + \frac{1}{2} \sum_{j \in N_i} \frac{u_j^k - u_i^k}{\max(d_i, d_j)}, \nonumber \\
u_i^{k+1} & = w_i^{k+1} + \left(  1 - \frac{2}{9U+1} \right) \left( w_i^{k+1} - w_i^k \right), \label{alm}
\end{align} 
where $U \geq n$ and $u^0 = w^0$. Then
$$\|w^k - \overline{w} \1\|_2^2 \leq 2 \left( 1 - \frac{1}{9U} \right)^{k} \|w^0 - \overline{w} \1\|_2^2,$$ where $\overline{w} = (1/n) \sum_{i=1}^n w_i^0$ is the initial average.  \label{linearconv}
\end{theorem} 

Thus if every node knows the upper bound $U$, the above theorem tells us that the number of iterations until every element of the vector $w^k$ is at most $\epsilon$ away from the initial average $\overline{w}$ is $O(U \ln (1/\epsilon))$. In the event that $U$ is within a constant factor of $n$, (e.g., $n \leq U \leq 10n$) this turns out to be linear in the number of nodes $n$. One situation in which this is possible is if every node precisely knows the number of nodes in the network, in which case they can simply set $U=n$. However, this scheme is also useful in a number of settings where the exact number of nodes in the system is not known (e.g., if nodes fail) as long as approximate knowledge of the total number of nodes is available. 

Intuitively, Eq.~(\ref{alm}) takes a lazy Metropolis update and accelerates it by adding an extrapolation term. Strategies of this form are known as over-relaxation in the numerical analysis literature \cite{varga} and as Nesterov acceleration in the optimization literature \cite{nest}. On a non-technical level, the extrapolation speeds up convergence by reducing the inherent oscillations in the underlying sequence. A key feature, however, is that the degree of extrapolation must be carefully chosen, which is where knowledge of the bound $U$ is required. \an{At present, open questions are} whether any improvement on the quadratic convergence time of Proposition \ref{diff-graphs} is possible without such an additional assumption, and whether a linear convergence time scaling can be obtained for time-varying graphs.

\section{Decentralized optimization over undirected graphs}
\label{sec:undirected}

We now shift our attention from decentralized averaging back to the problem of optimization. We begin by describing the (centralized) subgradient method, which is one of the most basic algorithms used in convex optimization.

\subsection{The subgradient method}

To avoid confusion in the sequel when we discuss decentralized optimization methods, here we consider \an{an iterative} method for minimizing a convex function $h : \R^d \rightarrow \R$. 
A vector $g \in \R^d$ is called a \emph{subgradient} of $h$ at the point $x$ if 
\begin{equation}
h(y) \geq h(x) + g^T (y-x), \quad \mbox{for all } x, y \in \R^d. \label{subgr_def}
\end{equation}
\ao{The subgradient may be viewed as a generalization of the notion of the gradient to non-differentiable (but convex) functions. Indeed,} if the function $h$ is continuously differentiable, then $g=\nabla h(x)$ is the only subgradient at $x$. In general, there are multiple subgradients at points $x$ where the function $h$ is not differentiable. \ao{See Figure~\ref{fig-subgr} for a graphical illustration.}

\begin{figure} \label{fig-subgr}
\begin{center}
    \includegraphics[scale=0.6]{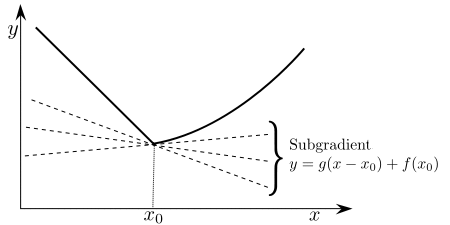}
    \end{center}
\caption{\ao{An illustration of the definition of a subgradient. At the point $x_0$, the function shown is not differentiable. However, there are a number of possible values $g$ such that the tangent line with slope $g$ at $x_0$ is a global understimate of the function, and some of them are shown in the figure. Each such $g$ is a subgradient of the function at $x_0$. The figure is a modified version of an image by Felix Reidel from~\cite{wikipedia}.}}
\end{figure}

The subgradient method\footnote{The earliest work on subgradient methods appears in~\cite{polyak}.} 
for minimizing the function $h$ is defined as the iterate process
\begin{equation} \label{subgr} u^{k+1} = u^k - \alpha^k g^k, \end{equation} 
where $g^k$ is a subgradient of the function $h$ at the point $u^k$. \ao{The quantity $\alpha^k$ is a nonnegative step-size.}

\an{The subgradient method has a somewhat different motivation than the gradient method. It is well known that the gradient is a descent direction at points that are not the global minima. At these points, unlike the gradient, the subgradient gives a direction along which either the function $h(\cdot)$ decreases or the distance toward the set of global minima decreases for small enough stepsizes. 
In general, it is hard to know what is the best step-size choice 
for the convergence of the subgradient method and, as a simple option, 
a diminishing stepsize $\alpha^k$ is commonly used, i.e.,
a stepsize $\alpha^k$ that decreases to zero as $k$ increases. However, to guarantee the convergence toward a global minimum of the function, 
the rate at which the stepsize decreases has to be carefully selected to avoid having the iterates stuck at 
a point that is not a minimizer of the function, 
while controlling the errors that are introduced due to the use of subgradient directions.
This intuition is captured by the following theorem
\footnote{Under weaker assumptions on the stepsize than those of Theorem~\ref{subgradient-theorem}, namely $\alpha^k\to0$ and $\sum_{k=0}^\infty\alpha^k=\infty$, 
one can show that $\liminf_{k\to\infty} h(x_k)=h^*$, 
where $h^*=\inf_{x\in\mathbb{R}^d} h(x)$, see~\cite{shor,polyakbook,Bertsekas2003}.}.}

\begin{theorem}[\ao{Convergence and Convergence Time for the Subgradient Method}]  \label{subgradient-theorem}
Let ${\cal U}^*$ be the set of minimizers of the function $h: \R^d \rightarrow \R$. 
Assume that (i) $h$ is convex, (ii) ${\cal U}^*$ is nonempty, (iii) 
$\|g\| \leq L$ for all subgradients $g$ of the function 
$h(\cdot)$.
\begin{enumerate} \item If the nonnegative step-size sequence $\alpha^k$ 
is ``summable but not square summable,'' i.e., 
\[\sum_{k=0}^{\infty} \alpha^k = +\infty \quad \text{and} \quad \sum_{k=0}^{+\infty} \left[ \alpha^k \right]^2 < \infty. \] 
Then, \an{the iterate sequence $\{u^k\}$ converges to some minimizer $u^*\in U^*$.}

\item If \ao{the subgradient method} is run for $T$ steps with the (constant) choice of stepsize $\alpha^k = 1/\sqrt{T}$ for $k=0, \ldots, T-1$, then  
\[ h \left( \frac{\sum_{k=0}^{T-1} u^k}{T} \right) - h^* \leq \frac{\|u^0 - u^*\|^2 + L^2}{2 \sqrt{T}}, \]
\an{where $h^*$ is the minimal value of the function, i.e., $h^*=h(u^*)$ for any $u^*\in U^*$.}
\end{enumerate}
\end{theorem}
\begin{proof}
\noindent
\an{(1) A proof can be found in Lemma~7 of~\cite{AAdirected}. 
\noindent
(2)
The distance $\|u^k-u^*\|^2_2$, for an arbitrary $u^*\in U^*$ is used to measure the progress of the basic subgradient method. From the definition of the method, for the constant stepsize it follows that
\[\|u^{k+1}-u^*\|^2_2 =\|u^k-u^*\|^2_2- 2\alpha(g^k)^T(u^k-u^*) + \alpha^2 \|g^k\|^2.\]
Then, using the subgradient defining inequality in Eq.~\ref{subgr_def} and the assumption that the subgradient norms are bounded by $L$, we obtain
\[\|u^{k+1}-u^*\|^2_2 =\|u^k-u^*\|^2_2- 2\alpha\left( h(u^k) -h(u^*)\right) + \alpha^2 L^2.\]
By summing these inequalities over $k=0,1,\ldots T$, re-arranging the terms,  and dividing by $2\alpha T$,
one can see that
\[\frac{1}{T}\sum_{k=0}^{T-1}h(u^k) -h(u^*)\le \frac{\|u^k-u^*\|^2_2}{2\alpha T} +\frac{\alpha L^2}{2}.\]
The result follows by using the convexity of $h(\cdot)$ which yields 
\[h\left(\frac{\sum_{k=0}^{T-1}u^k}{T}\right)\le \frac{1}{T}\sum_{k=0}^{T-1}h(u^k),\]
and by letting $\alpha=\frac{1}{\sqrt{T}}.$}
\end{proof}
\an{For the diminishing step in part (1), since the iterates $u^k$ converge to some minimizer $u^*$, so does any weighted average of the iterates (with positive weights). In particular, it follows that
\[\lim_{t \rightarrow \infty}\frac{\sum_{k=0}^{t} \alpha^k u^k}{\sum_{k=0}^t \alpha^k}=u^*.\]
Furthermore, it is a fact that any convex function whose domain is the entire space of the decision variables is continuous at every point. Thus, by continuity of $h(\cdot)$, it follows that 
\[ \lim_{t \rightarrow \infty} h \left( \frac{\sum_{k=0}^{t} \alpha^k u^k}{\sum_{k=0}^t \alpha^k} \right) = h(u^*). \] 
In the case of a fixed stepsize, part (2) provides an error bound in terms of the function values.}
\ao{On a conceptual level, the main takeaway is that the subgradient method produces $O \left(1/\sqrt{T} \right)$ convergence to the optimal function value in terms of the number of iterations $T$.}

\an{Similar to gradients, for the convex functions defined over the entire space, the subgradients are ``linear'' in the sense that 
a subgradient of the sum of two convex functions can be obtained as a sum of two subgradients (one for each function). Formally,
if $g_1$ is a subgradient of a function $f_1$ at $x$ and $g_2$ is a subgradient of $f_2$ at $x$, then $g_1 + g_2$ is a subgradient of $f_1 + f_2$ at $x$. (This follows directly from the subgradient definition in Eq.~\eqref{subgr_def}.)}

\subsection{Decentralizing the subgradient method}

We now return to the problem of decentralized optimization. To recap, we have $n$ nodes, interconnected in a time-varying network capturing which pairs of nodes can exchange messages. For now, assume that these networks are all undirected. (This is relaxed in Section \ref{sec:directed}, which considers directed graphs.) Node $i$ knows the convex function $f_i: \R^m \rightarrow\R$ and the nodes would like to minimize the function 
\begin{equation} 
f(x) =  \frac{1}{n} \sum_{i=1}^n f_i(x) \label{optprob} 
\end{equation}
in a decentralized way.  If all the functions $f_1(x), \ldots, f_n(x)$ were available at a single location, we could directly apply the subgradient method to their average $f(x)$:
\[ u^{k+1} = u^k - \alpha^k \frac{1}{n} \sum_{i=1}^n \overline{g}_i^k, \] where $\overline{g}_i^k$ is a subgradient of the function $f_i(\cdot)$ at $u^k$. Unfortunately, this is not a decentralized method under our assumptions, since only node $i$ knows the function $f_i(\cdot)$, and thus only node $i$ can compute a subgradient of $f_i(\cdot)$. 

A {\em decentralized subgradient method} solves this problem by interpolating between the subgradient method and an average consensus scheme from Section \ref{sec:consensus}. In this scheme, node $i$ maintains the variable $x_i^k$ which is updated as 
\begin{equation} \label{distsub} 
x_i^{k+1} = \sum_{j \in N_i^k} a_{ij}^k x_j^k - \alpha^k g_i^k, \end{equation} 
where $g_i^k$ is the subgradient of $f_i(\cdot)$ at $x_i^k$.  \ao{Here the coefficients $[a_{ij}]$ come from any of the average consensus methods we discussed in Section \ref{sec:consensus}.} \ao{We will refer to this update as the {\em decentralized subgradient method}. }
Note that this update is decentralized in the sense that node $i$ only requires local information to execute it. 
In the case when $f: \R \rightarrow \R$, the quantities $x_j^k$ are scalars and we can write this as
\begin{equation} 
x^{k+1} = A^k x^k - \alpha^k g^k,  \label{eqn:decentralized_subgradient_vectorized}
\end{equation}
where the vector $x^k \in \R^n$ stacks up the $x_i^k$ and $g^k \in \R^n$ stacks up the $g_i^k$. The weights $a_{ij}^k$ should be chosen by each node in a decentralized way. Later within this section, we will assume that the matrices $A^k$ are doubly stochastic; perhaps the easiest way to achieve this is to use the \ao{lazy} Metropolis iteration of Eq. (\ref{lmet}). 

Intuitively, \ao{the decentralized subgradient method of} Eq. (\ref{distsub}) pulls the value $x_i^k$ at each node in two directions: on the one hand towards the minimizer (via the subgradient term) and on the other hand towards neighboring nodes (via the averaging term). \ao{Eq. (\ref{distsub}) can be thought of as reconciling these pulls; note that the strength of the consensus pull does not change, but the strength of the subgradient pull is controlled by the stepsize, and this stepsize  $\alpha^k$ will be \ao{later} chosen to decay to zero, so that in the limit the consensus term will prevail.} However, if the rate at which the stepsize decays to zero is slow enough, 
then under appropriate conditions consensus will be achieved 
\ao{not on some arbitrary point, but rather} on a global minimizer of $f(\cdot)$.

We now turn to the analysis of \ao{the decentralized subgradient method}. For simplicity, we make the assumption that all the functions $f_i(\cdot)$ are from $\R$ to $\R$; this simplifies the presentation but otherwise has no effect on the results. The same analysis extends in a straightforward manner to functions $f_i : \R^d \rightarrow \R$ with $d > 1$ but requires more cumbersome notation.\footnote{\ao{If $x_i^k$ are vectors, one can still stack the per-node vectors into a network-wide vector $x^k$, but then in \eqref{eqn:decentralized_subgradient_vectorized} the matrix $A^k$ must be changed to $A^k \otimes I_d$ where $\otimes$ is the Kronecker product and $I_d$ is the $d \times d$ identity matrix. For such details, we refer the interested reader to~\cite{Nedic09a,Ram2012,Nedic2011}, which do not assume that $x_i^k$ are scalars.}}

\begin{theorem}[\ao{\cite{Nedic09a,Ram2012}}][\ao{Convergence and Convergence Time for the Decentralized Subgradient Method}]
Let ${\cal X}^*$ denote the set of minimizers of the function $f$. We assume that: 
(i) each $f_i$ is convex; 
(ii) ${\cal X}^*$ is nonempty; 
(iii) each function $f_i$ has the property that its subgradients at any point are bounded by the constant $L$;
(iv) the matrices $A^k$ are doubly stochastic and there exists some $\alpha > 0$ such that the graph sequence 
$[G_{A^0}]_{\alpha}, [G_{A^1}]_{\alpha}, [G_{A^2}]_{\alpha}, \ldots$ satisfies Assumption \ref{strongconn}; and
(v) the initial  values $x_i^0$ are the same across all nodes\footnote{\an{This assumption is not necessary for the results stated here, but we use it to simplify the exposition. When this assumption is violated, the bound in part (ii) has an additional term depending on the spread of the initial values. This term decays exponentially on the order of $\lambda^k$.}} (e.g., $x_i^0=0$). Then:
\begin{enumerate} \item  If the \an{positive} step-size sequence $\alpha^k$ is ``summable but not square summable,'' i.e., 
\[ \sum_{k=0}^{\infty} \alpha^k = +\infty \quad \text{and} \quad \sum_{k=0}^{+\infty} \left[ \alpha^k \right]^2 < \infty, \] 
then\footnote{In fact we can show a stronger result that, 
as $k\to\infty$, the iterate sequences 
$\{x_i^k\}$ converge to a common minimizer $x^*\in X^*$,  for all $i$. However, the proof is more involved; see~\cite{Ram2012}.} 
for any $x^* \in {\cal X}^*$, we have that for all 
$i=1, \ldots, n$, 
\[\lim_{t\to\infty}  f \left( \frac{\sum_{l=0}^t \alpha^l x_i^l }{\sum_{l=0}^t \alpha^l} \right)f(x^*). \]
\item 
If we run the protocol for $T$ steps with (constant) step-size $\alpha^k = 1/\sqrt{T}$, and with the notation 
$y^k = (1/n) \sum_{i=1}^n x_i^k$, then we have that for all $i=1, \ldots, n$,
\begin{equation} \label{optconvbound}  f \left( \frac{\sum_{l=0}^{T-1} y^k}{T} \right) - f(x^*) \leq \frac{(y^0 - x^*)^2 + L^2}{2 \sqrt{T}} + \frac{L^2}{\sqrt{T}(1-\lambda)}, \end{equation}
\end{enumerate} where $\lambda$ is defined by Eq. (\ref{lambdadef}).  \label{mainoptthm}
\end{theorem} 

We remark that the quantity $(\sum_{l=0}^{T-1} y^l)/T$ on which the suboptimality bound is proved can be computed via an average consensus protocol after the protocol is finished if node $i$ keeps track of $(\sum_{l=0}^{T-1} x_i^l)/T$. 

Comparing \an{part (2)} of Theorems~\ref{subgradient-theorem} and~\ref{mainoptthm}, and ignoring the similar terms involving the initial conditions, we see that the convergence bound gets multiplied by $1/(1-\lambda)$. This term may be thought of as measuring the ``price of decentralization'' resulting from having knowledge of the objective function decentralized throughout the network rather than available entirely at one place. 

We can use Proposition \ref{diff-graphs} to translate this into concrete convergence times on various families of graphs, as the next result shows. For $\epsilon > 0$, let us define the \emph{$\epsilon$-convergence time} to be the first time when 
\[ f \left( \frac{\sum_{l=0}^{T-1} y^l}{T} \right) - f(x^*) \leq \epsilon. \]  Naturally, the convergence time will depend on $\epsilon$ and on the underlying sequence of matrices/graphs.

\begin{corollary}[\ao{Network Scaling for the Decentralized Subgradient Method}] \label{scalings}
Suppose all the hypotheses of Theorem \ref{mainoptthm} are satisfied, and suppose further that the weights $a_{ij}^k$ are the lazy Metropolis weights defined in Eq. (\ref{lmet}). Then the convergence time can be upper bounded as 
\[ O \left( \frac{  \max \left( (y^0 - x^*)^4, L^4 P_n^2  \right)}{\epsilon^2} \right), \] where if the graphs $G^i$ are
\begin{enumerate} \item ...path graphs, then $P_n = O\left( n^2  \right)$;
\item ...$2$-dimensional grid, then $P_n = O\left( n \log n  \right)$;
\item ...$2$-dimensional torus, then $P_n = O\left( n  \right)$;
\item ...$k$-dimensional torus, then $P_n = O\left( n^{2/k}  \right)$;
\item ...complete graphs, then $P_n = O( 1)$;
\item ...expander graphs, then $P_n = O( 1)$;
\item ...star graphs, then $P_n = O\left(n^2 \right)$;
\item ...two-star graphs, then $P_n = O\left(n^2  \right)$;
\item ...Erd\H{o}s-R\'{e}nyi random graphs, then  $P_n  = O(1)$;
\item ...geometric random graphs, then $P_n = O( n \log n )$;
\item ...any connected undirected graph, then $P_n = O\left( n^2 \right)$.
\end{enumerate} 
\end{corollary}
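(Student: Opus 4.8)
The plan is to combine the finite-time optimization bound of Theorem~\ref{mainoptthm} with the graph-dependent spectral-gap estimates implicit in Proposition~\ref{diff-graphs}; the corollary is essentially a translation of the abstract quantity $1/(1-\lambda)$ into concrete scalings, so almost all the work has already been done. First I would start from part~2 of Theorem~\ref{mainoptthm}, which after running for $T$ steps with $\alpha^k = 1/\sqrt{T}$ guarantees
\[ f\!\left( \frac{\sum_{l=0}^{T-1} y^l}{T} \right) - f(x^*) \leq \frac{1}{\sqrt{T}} \left[ \frac{(y^0-x^*)^2 + L^2}{2} + \frac{L^2}{1-\lambda} \right]. \]
Demanding that the right-hand side be at most $\epsilon$ and solving for $T$ shows that the $\epsilon$-convergence time satisfies
\[ T = O\!\left( \frac{1}{\epsilon^2} \left[ (y^0-x^*)^2 + \frac{L^2}{1-\lambda} \right]^2 \right). \]

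Next I would tidy up the bracketed expression. Because $\lambda = \sup_{l}\sigma_2(A^l)$ lies in $[0,1)$ for the lazy Metropolis matrix on a connected graph, we have $1/(1-\lambda) \geq 1$, so the bare $L^2$ term produced by the subgradients is dominated by $L^2/(1-\lambda)$ and can be folded in at the cost of a constant. The bracket is therefore $O\!\big(\max\big((y^0-x^*)^2,\, L^2/(1-\lambda)\big)\big)$, and squaring it yields $O\!\big(\max\big((y^0-x^*)^4,\, L^4/(1-\lambda)^2\big)\big)$. Setting $P_n = 1/(1-\lambda)$ reproduces exactly the claimed prefactor $O\!\big(\epsilon^{-2}\max((y^0-x^*)^4, L^4 P_n^2)\big)$, so the entire content of the corollary reduces to estimating $1/(1-\lambda)$ for each graph family.

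For that last step I would invoke Proposition~\ref{diff-graphs} together with Eq.~\eqref{tlambd}. Since the averaging convergence time there equals $O\big(\tfrac{1}{1-\lambda}\log(1/\epsilon)\big)$, the factor multiplying $\log(1/\epsilon)$ in each listed rate is precisely $1/(1-\lambda)$, hence $P_n$. Reading the list off in this way gives $P_n = O(n^2)$ for the path, star, two-star, and arbitrary connected graphs; $P_n = O(n^{1/k}\log n)$ for the $k$-dimensional grid, specializing to $O(\sqrt{n}\log n)$ when $k=2$; $P_n = O(n\log n)$ for the geometric random graph; and $P_n = O(1)$ for the complete graph, for expanders, and (with high probability) for Erd\H{o}s--R\'{e}nyi graphs.

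I expect the only real obstacle to be bookkeeping rather than mathematics. One must check that the scalar $\lambda$ appearing in the optimization bound of Theorem~\ref{mainoptthm} is the same second-largest-singular-value quantity (Eq.~\eqref{lambdadef}) whose spectral gap is estimated in Proposition~\ref{diff-graphs}, and in particular that the laziness of the Metropolis matrix is what guarantees $\lambda < 1$ so that the bound is not vacuous; and one must verify that the max/squaring manipulations do not conceal any hidden dependence on $n$. No fresh analysis of either the consensus dynamics or the subgradient recursion is required---the corollary is a dictionary lookup applied to the two terms of the Theorem~\ref{mainoptthm} estimate.
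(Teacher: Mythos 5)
Your proposal is correct and follows essentially the same route as the paper, whose entire proof is the single observation that the corollary follows by combining the upper bounds on $1/(1-\lambda)$ from Proposition~\ref{diff-graphs} with Eq.~(\ref{optconvbound}); your steps of solving for $T$, folding the bare $L^2$ term into $L^2/(1-\lambda)$ using $1/(1-\lambda)\geq 1$, squaring the maximum, and identifying $P_n$ with the factor multiplying $\log(1/\epsilon)$ in Proposition~\ref{diff-graphs} are exactly the ``immediate'' details the paper omits. One minor remark: your reading of the $2$-dimensional grid case via the $k$-dimensional formula reproduces the corollary's stated $P_n = O(\sqrt{n}\log n)$, whereas taking item 2 of Proposition~\ref{diff-graphs} literally would give $P_n = O(n\log n)$ --- an inconsistency internal to the paper's two lists, not a flaw in your argument.
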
 

These bounds follow immediately by putting together the upper bounds on $1/(1-\lambda)$ discussed in Proposition \ref{diff-graphs} with Eq. (\ref{optconvbound}). 

We remark that it is possible to decrease the scaling from $O(L^4 P_n^2)$ to $O(L^2 P_n)$ in the above corollary if the constant $L$,  the type of the underlying graph (e.g., star graph, path graph), and the number of nodes $n$ is known to all nodes. Indeed, this can be achieved by setting the stepsize $\alpha^k = \beta/\sqrt{T}$ and using a hand-optimized $\beta$ (which will depend on $L$, $n$, as well as the kind of underlying graphs). We omit the details but this is very similar to the optimization done in~\cite{Duchi2012}.

We now turn to the proof of Theorem \ref{mainoptthm}. We will need two preliminary lemmas covering some background in optimization. \ao{The first lemma discusses how the bound on the norms of the subgradients translate into Lipschitz continuity of the underlying function.}

\begin{lemma} \label{p-bound} 
Suppose $h: \R^d \rightarrow \R$ is a convex function such that $h(\cdot)$ has subgradients $g_x, g_y$ at the points $x,y$, respectively, satisfying $\|g_x\|_2 \leq L$ and  $\|g_y\|_2 \leq L$.
Then \[ |h(y) - h(x) | \leq L \|y-x\|_2 \]
\end{lemma}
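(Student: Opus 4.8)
The plan is to derive the bound directly from the defining subgradient inequality, applied once at each of the two points, followed by a single use of Cauchy-Schwarz. Recall that since $g_x$ is a subgradient of $h$ at $x$, convexity gives $h(y) \geq h(x) + g_x^T(y-x)$ for every $y$, and symmetrically, $g_y$ being a subgradient at $y$ gives $h(x) \geq h(y) + g_y^T(x-y)$. These two inequalities are all the structure I need.

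To obtain an upper bound on the difference $h(y) - h(x)$, the key observation is that one should invoke the subgradient inequality at the point $y$ rather than at $x$. Rearranging $h(x) \geq h(y) + g_y^T(x-y)$ yields $h(y) - h(x) \leq g_y^T(y-x)$, and then Cauchy-Schwarz together with the hypothesis $\|g_y\|_2 \leq L$ gives $h(y) - h(x) \leq \|g_y\|_2 \|y-x\|_2 \leq L\|y-x\|_2$. Next I would repeat the same argument with the roles of $x$ and $y$ exchanged: rearranging $h(y) \geq h(x) + g_x^T(y-x)$ gives $h(x) - h(y) \leq g_x^T(x-y) \leq \|g_x\|_2\|x-y\|_2 \leq L\|y-x\|_2$. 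Combining the two one-sided bounds yields $|h(y)-h(x)| \leq L\|y-x\|_2$, as claimed.

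There is no substantive obstacle here; the result is essentially a one-line consequence of the subgradient inequality. The only point requiring a moment's care is pairing the direction of the bound with the correct subgradient: to control $h(y)-h(x)$ from above one must use the subgradient at $y$, and to control it from below (equivalently, to bound $h(x)-h(y)$ from above) one uses the subgradient at $x$. This is precisely why the hypothesis bounds \emph{both} subgradients $g_x$ and $g_y$ by $L$ — each of the two one-sided estimates consumes a different one, and neither alone suffices to establish the full Lipschitz inequality.
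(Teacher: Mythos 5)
Your proof is correct and is essentially identical to the paper's: both apply the subgradient inequality once at $x$ and once at $y$, followed by Cauchy--Schwarz, to obtain the two one-sided bounds. The only difference is cosmetic---you state both estimates as upper bounds (on $h(y)-h(x)$ and on $h(x)-h(y)$), whereas the paper writes them as a lower and an upper bound on the single quantity $h(y)-h(x)$.
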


\begin{proof} On the one hand, we have by definition of subgradient
\[ h(y) \geq h(x) + g_x^T (y-x),  \] so that, \an{by the Cauchy-Schwarz inequality,}
\begin{equation} \label{part1} h(y) - h(x) \geq - L \|y-x\|_2. \end{equation} 
On the other hand 
\[ h(x) \geq h(y) + g_y^T (x-y), \] so that 
\[ h(x) - h(y) \geq - L \|x -y\|_2, \] which we rearrange as 
\begin{equation} \label{part2} h(y) - h(x) \leq L \|y-x\|_2. \end{equation} 
Together Eq. (\ref{part1}) and Eq. (\ref{part2}) imply the lemma.
\end{proof} 

Our overall proof strategy is to view the \ao{decentralized subgradient method} as a kind of perturbed consensus process. To that end, the next lemma extends our previous analysis of the consensus process to deal with perturbations. 

\begin{lemma} \label{pcons}  
Suppose \[ x^{k+1} = A^{k}x^k + \Delta^{k}, \] 
where $A^k$ are doubly stochastic matrices satisfying Assumption \ref{strongconn} and $\Delta^k \in \R^n$ are perturbation vectors.
\begin{enumerate}
\item If $\sup_k \|\Delta^k\|_2 \leq L'$ then 
\[ \left\|x^k - \frac{\1^T x^k}{n} \1 \right\|_2 
\leq \lambda^k \left\|x^0 - \frac{\1^T x^0}{n} \1 \right\|_2 + \frac{L'}{1-\lambda}, \] 
where $\lambda$ is from Eq. (\ref{lambdadef}). 
 \item If $\Delta^{k} \rightarrow 0$, then 
 \an{$x^{k} - \frac{\1^T x^k}{n}\1 \rightarrow 0$.}
\end{enumerate}
\end{lemma}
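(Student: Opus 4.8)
The plan is to isolate the \emph{disagreement} component of $x^k$ from its running mean and show that this component obeys a contracting linear recursion driven only by the projected perturbation. Writing $\bar{x}^k = \1^T x^k / n$ and $y^k = x^k - \bar{x}^k \1 = P x^k$, where $P = I - \frac{1}{n}\1\1^T$ is the orthogonal projection onto the orthogonal complement of $\1$, I would first track how $y^k$ evolves. Because each $A^k$ is doubly stochastic we have both $A^k \1 = \1$ and $\1^T A^k = \1^T$, from which the two identities $P A^k = A^k - \frac{1}{n}\1\1^T = A^k P$ show that $P$ commutes with $A^k$. Applying $P$ to the update $x^{k+1} = A^k x^k + \Delta^k$ then yields the clean recursion $y^{k+1} = A^k y^k + P\Delta^k$, in which the mean has been entirely eliminated (equivalently, one checks directly that $A^k x^k - \bar{x}^k\1 = A^k y^k$ using $A^k\1 = \1$, while the perturbation contributes $\Delta^k - \frac{\1^T\Delta^k}{n}\1 = P\Delta^k$).

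The next step is a one-step contraction estimate. Since $y^k \perp \1$ and $\1/\sqrt{n}$ is a top right singular vector of the doubly stochastic matrix $A^k$ with singular value $1$ (because $(A^k)^T A^k \1 = \1$), expanding $y^k$ in the remaining right singular vectors gives $\|A^k y^k\|_2 \le \sigma_2(A^k)\|y^k\|_2 \le \lambda\|y^k\|_2$, with $\lambda$ as in Eq.~(\ref{lambdadef}); this is exactly the mechanism behind Proposition~\ref{secsing}. Combining this with the nonexpansiveness of the orthogonal projection, $\|P\Delta^k\|_2 \le \|\Delta^k\|_2$, produces the scalar recursion $\|y^{k+1}\|_2 \le \lambda\|y^k\|_2 + \|\Delta^k\|_2$.

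For part~1, I would unroll this recursion to obtain $\|y^k\|_2 \le \lambda^k\|y^0\|_2 + \sum_{j=0}^{k-1}\lambda^{k-1-j}\|\Delta^j\|_2$; bounding each $\|\Delta^j\|_2$ by $L'$ and summing the geometric series via $\sum_{i\ge 0}\lambda^i = 1/(1-\lambda)$ gives precisely $\|y^k\|_2 \le \lambda^k\|y^0\|_2 + L'/(1-\lambda)$, the claimed bound. For part~2, I would invoke the standard fact that a recursion $a_{k+1}\le \lambda a_k + b_k$ with $\lambda\in[0,1)$ and $b_k\to 0$ forces $a_k\to 0$: given $\epsilon>0$, choose $K$ so that $\|\Delta^k\|_2\le\epsilon$ for all $k\ge K$, reindex the recursion from time $K$, and apply part~1 (with $y^K$ as initial condition and $\epsilon$ in place of $L'$) to get $\|y^k\|_2\le \lambda^{k-K}\|y^K\|_2 + \epsilon/(1-\lambda)$; letting $k\to\infty$ and then $\epsilon\to 0$ yields $y^k\to 0$.

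The only genuinely delicate point is the contraction estimate $\|A^k y^k\|_2 \le \lambda\|y^k\|_2$: it uses double stochasticity twice over, first to ensure $y^k$ remains in the invariant subspace $\1^\perp$ (since $\1^T A^k y^k = \1^T y^k = 0$), and second to make $\1$ a singular vector so that $\sigma_2$ rather than $\sigma_1 = 1$ controls the action on that subspace. Once this is in hand, the rest is the routine unrolling of a linear recursion, and the implicit requirement $\lambda<1$ (which holds under Assumption~\ref{strongconn}) is exactly what keeps the geometric sum finite and the conclusions nonvacuous.
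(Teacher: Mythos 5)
Your proof is correct and takes essentially the same route as the paper's: both project out the mean to obtain the disagreement recursion $e^{k+1}=A^k e^k+\left(\Delta^k-\tfrac{\1^T\Delta^k}{n}\1\right)$, bound the homogeneous part through the second singular value (Proposition~\ref{secsing}), and unroll to get $\|e^k\|_2\le\lambda^k\|e^0\|_2+\sum_{j=0}^{k-1}\lambda^{k-1-j}\|\Delta^j\|_2$. The only cosmetic differences are that you contract one step at a time rather than applying the singular-value bound to the matrix products $A^{k-1:j}$, and for part~2 you restart the recursion at a fixed large $K$ whereas the paper splits the convolution sum at $k/2$; both are standard ways of showing that the geometric convolution of a null sequence vanishes.
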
 

\begin{proof} 
For convenience, let us introduce the notation 
$$y^k = \frac{\1^T x^k}{n}, \quad e^k = x^k  - y^k \1, \quad m^k = \frac{1^T \Delta^k}{n}.$$ Since 
\[ y^{k+1}  = y^k   +m^k, \] 
we have that
\[ e^{k+1} = A^k e^k + \Delta^k - m^k \1, \] 
and therefore
\begin{align*}
e^k &= A^{k-1:0} e^0 + A^{k-1:1} (\Delta^0 - m^0 \1) \\
 &\quad + \cdots + A^{k-1} (\Delta_{k-2} - m^{k-2} \1) + (\Delta_{k-1} - m^{k-1} \1). 
\end{align*}
Now using the fact that the vectors $e^0$ and $\Delta^i - m^i \1$ have mean zero, by Proposition \ref{secsing} we have
\begin{equation} \label{ebound} 
\|e^k\|_2 \leq \lambda^{k} \|e^0\|_2 + \sum_{j=0}^{k-1} \lambda^{k-1-j} \|\Delta^{j}\|_2. \end{equation} 
This equation immediately implies the first claim of the lemma. 

Now consider the second claim.  We  define 
\begin{eqnarray*} L_{\rm first-half}^k & = & \sup_{0 \leq j < k/2} \|\Delta^j\|_2  \\ 
L_{\rm second-half}^k & = & \sup_{k/2 \leq j \leq k} \|\Delta^j\|_2  .
\end{eqnarray*} 
Since $\Delta^k \rightarrow 0$, we have 
\begin{equation} \label{bounds} 
\sup_{k\ge2} L_{\rm first-half}^k < \infty, \qquad
\lim_{k\to\infty} L_{\rm second-half}^k = 0. \end{equation}
Now Eq. (\ref{ebound}) implies
\[ \|e^k\|_2 \leq \lambda^{k-1} \|e^0\|_2 + \lambda^{k/2} \frac{L_{\rm first-half}^k}{1-\lambda}  
+ \frac{L_{\rm second-half}^k}{1-\lambda} . \] 
Combining this with Eq. (\ref{bounds}), we have that $\|e^k\|_2 \rightarrow 0$ and 
this proves the second claim of the lemma. 
\end{proof}

\ao{With these lemmas in place, we now turn to the proof of Theorem \ref{mainoptthm}. Our approach will be to view the decentralized subgradient method as a perturbation of a subgradient-like process followed by {\em averaging} of the entries of the vector $x^k$. Provided that the step-size $\alpha^k$ decays to zero at the appropriate rate, we will argue that (i) the vector $x^k$ is not too far from its average and (ii) this average  makes continual progress towards} \an{a minimizer of the function $f(\cdot)$}.

\begin{proof} [Proof of Theorem \ref{mainoptthm}]
Recall that we are assuming, for simplicity,  that the functions $f_i$ are from $\R$ to $\R$. 
As before, let $y^k$ be the average of the entries of the vector $x^k \in \R^n$, i.e., 
\[ y^{k} = \frac{\1^T x^k}{n}. \] 
Since the matrices $A^k$ are doubly stochastic, $\1^T A^k = \1^T$ so that 
\[ y^{k+1} = y^k - \alpha^k \frac{\1^T g^k}{n}. \] 
Now for any $x^* \in {\cal X}^*$, we have  
\begin{equation} \label{basiciterate}  
(y^{k+1} - x^*)^2 \leq (y^k - x^*)^2 + \left[ \alpha^k \right]^2 L^2 - 2 \alpha^k \frac{\sum_{i=1}^n g_i^k}{n} (y^k - x^*). \end{equation} Furthermore, for each $i=1, \ldots, n$,
\begin{eqnarray*} g_i^k (y^k - x^*) & = & g_i^k (x_i^k - x^* + y^k - x_i^k) \\ 
& = & g_i^k (x_i^k - x^*) + g_i^k (y^k - x_i^k ) \\ 
& \geq & f_i(x_i^k) - f_i(x^*) - L \left|y^k - x_i^k \right| \\
& \geq & f_i(y^k) - f_i(x^*) - 2 L \left|y^k- x_i^k \right|, 
\end{eqnarray*} where the first inequality uses a rearrangement of the definition of the subgradient and the last inequality uses  Lemma \ref{p-bound}. Plugging this into Eq. (\ref{basiciterate}), we obtain 
\begin{align*}
(y^{k+1} - x^*)^2 &\leq (y^k - x^*)^2 + \left[ \alpha^k \right]^2  L^2 - 2 \alpha^k ( f(y^k) - f(x^*)) \\
&\quad + 2 L \alpha^k \frac{1}{n} \sum_{i=1}^n \left|y^k - x_i^k \right|,  
\end{align*} 
or 
\begin{align*} 
2 \alpha^k ( f(y^k) - f(x^*)) &\leq  (y^k - x^*)^2  - (y^{k+1} - x^*)^2 \\
&\quad + \left[ \alpha^k \right]^2  L^2  + 2 L \alpha^k \frac{1}{n} \sum_{i=1}^n \left|y^k - x_i^k \right|. 
\end{align*}
We can sum this up to obtain
\begin{align*}\label{xxxx}  
2 \sum_{l=0}^t \alpha^l &\left( f(y^l) - f(x^*) \right) \\
&\leq (y^0 - x^*)^2  - (y^{t+1} - x^*)^2 \\
&\quad + L^2  \sum_{l=0}^t \left[ \alpha^l \right]^2  +  2 L  \sum_{l=0}^t \alpha^l  \frac{1}{n} \sum_{i=1}^n \left|y^l - x_i^l \right|, 
\end{align*}
which in turn  implies 
\begin{align*} 
f&\left(\frac{\sum_{l=0}^t \alpha_l y^l }{\sum_{l=0}^t \alpha_l} \right) - f(x^*) \\
&\leq \frac{ (y^0 - x^*)^2 + L^2 \sum_{l=0}^t \left[ \alpha^l \right]^2  +  2 L  \sum_{l=0}^t \alpha^l (1/n) \sum_{i=1}^n \left|y^l - x_i^l \right|}{2 \sum_{l=0}^t \alpha^l}  \\
&= \frac{ (y^0 - x^*)^2 + L^2 \sum_{l=0}^t \left[ \alpha^l \right]^2 }{2 \sum_{l=0}^t \alpha^l} \\
&\quad + \frac{  2 L  \sum_{l=0}^t \alpha^l (1/n) \sum_{i=1}^n \left|y^l - x_i^l \right|}{2 \sum_{l=0}^t \alpha^l}.
\end{align*} 
We now turn to the first claim of the theorem statement. The first term on the right-hand side goes to zero because its numerator is bounded while its denominator is unbounded (due to the assumption that the step-size is summable but not square summable). For the second term, we view $-\alpha^k g^k$ as the perturbation $\Delta^k$ in Lemma~\ref{scons} to obtain that $x^l - y^l \1 \rightarrow 0$, and in particular $x_i^l - y_i^l \rightarrow 0$ for each $i$. It follows that the Ces\`{a}ro sum (which is exactly the second term on the right-hand side) must go to zero as well. 
We have thus shown that 
\[ f \left( \frac{\sum_{l=0}^t \alpha^l y^l }{\sum_{l=0}^t \alpha^l} \right) - f(x^*) \rightarrow 0. \] 
Putting this together with Lemma~\ref{pcons}, which implies that $x_i^l - y^l \rightarrow 0$ for all $i$, 
we complete the proof of the first claim. 

For the second claim, using (i) Lemma \ref{pcons}, (ii) the assumption that the initial conditions are the same across all nodes, and (iii) the inequalities $\|z\|_1 \leq \sqrt{n} \|z\|_2$ for vectors $z \in \R^n$, we have the bound 
\begin{align} 
f&\left(\frac{\sum_{l=0}^{T-1} y^l}{T} \right) - f(x^*) \nonumber \\
& \leq \frac{(y^0 -x^*)^2 + L^2 + 2 L \sum_{l=0}^{T-1} 1/\sqrt{T} (1/n) \left( \frac{L \sqrt{n} \sqrt{n}}{\sqrt{T}(1-\lambda)} \right)  }{2 \sqrt{T}} \nonumber \\
& = \frac{(y^0 - x^*)^2 + L^2}{2 \sqrt{T}} + \frac{L^2}{\sqrt{T}(1-\lambda)}, \label{finitebound}
\end{align} 
which completes the proof of the second claim. 
\end{proof} 

\subsection{Improved scaling with the number of nodes}  
The results of Corollary \ref{scalings} improve upon those reported in~\cite{Nedic09a, Ram2010, Duchi2012}. 
A natural question is whether it is possible to further improve the scalings even further. In particular, one might wonder how the worst-case convergence time of decentralized optimization scales with the number of nodes in the network. 
In general, this question is open. Partial progress was made in~\cite{lincons1, lincons2}, where, under the assumption that all nodes know an order-accurate bound on the total number of nodes in the network,  it was shown that we can use the linear time convergence of average consensus described in Theorem \ref{linearconv} to obtain a corresponding convergence time for decentralized optimization when the underlying graph is fixed and undirected. Specifically, \cite{lincons1, lincons2} consider the following update rule 
\begin{eqnarray} y_i^{k+1} 
& = & x_i^k +  \frac{1}{2} \sum_{j \in N_i} \frac{x_j^k - x_i^j}{\max(d_i, d_j)} - \beta g_i^k \nonumber \\ 
z_i^{k+1} & = & y_i^k - \beta g_i^k  \label{optaccel} \\ 
x_i^{k+1} & = & y_i^{k+1} + \left(  1 - \frac{2}{9U+1} \right) \left( y_i^{k+1} - z_i^{k+1} \right), \nonumber
\end{eqnarray}  where $g_i^{\ao{k}}$ is a subgradient of the function $f_i(\cdot)$ at the point $y_i^{\ao{k}}$. As in Section \ref{sec:consensus}, here the number $U$ is an upper bound on the number of nodes known to each individual node, and it is assumed that $U$ is within a constant factor of the true number of nodes,  i.e., $n \leq U \leq cn$ for some constant $c$ (not depending on $n,U$ or any other problem parameters).

By relying on Theorem~\ref{linearconv}, it is shown in \cite{lincons1, lincons2} that the corresponding time until this scheme (followed by a round of averaging) is $\epsilon$ close to consensus on a minimizer of $(1/n) \sum_{i=1}^n f_i(\cdot)$ is $O( n \log n + n/\epsilon^2)$. It is an open question at present whether a similar convergence time can be achieved over time-varying graphs or without knowledge of the upper bound $U$.

\subsection{The strongly convex case} 
The error decrease of $1/\sqrt{T}$ with the number of iterations $T$ is, in general, the best possible rate for dimension-independent convex optimization \cite{nemyud}. Under the stronger assumption that the underlying functions $f_i(\cdot)$ are strongly convex with Lipschitz-continuous gradients, gradient descent will converge geometrically. Until recently, however, there were no corresponding decentralized protocols with a geometric rate. 

Significant progress on this issue was first made in \cite{extra}, where, over fixed undirected graphs, 
the following scheme was proposed:
\begin{equation} \label{extra} 
x^{k+2} = (I + W) x^{k+1} - \widetilde{W}x^k - \alpha \left[ \nabla f (x^{k+1}) - \nabla f(x^k) \right], \end{equation} with the initialization \[ x^1 = W x^0 - \alpha \nabla f(x^0). \] Here, for simplicity, we continue with the assumption that the functions $f_i(\cdot)$ are from $\R$ to $\R$. The matrices $W$ and $\widetilde{W}$ are two different, appropriately chosen, symmetric stochastic matrices compatible with the underlying graph; for example, 
$W$ might be taken to be the Metropolis matrix and $\widetilde{W} = (I+W)/2$. 
It was shown in \cite{extra} that this scheme, called EXTRA, drives all nodes to the global optimal at a geometric rate under natural technical assumptions. 

\ao{It is not immediately obvious how to extend the EXTRA update to handle time-varying directed graphs; the original proof in~\cite{extra} only covered static, undirected graphs. Progress on this question was made 
in~\cite{diging} which, in addition to providing a geometrically convergent method in the time-varying and directed cases, also provides a new intuitive interpretation of EXTRA.} Indeed, \cite{diging} observes that the scheme
\begin{eqnarray} x^{k+1} & = & W^k x^k - \alpha y^k \nonumber \\ 
y^{k+1} & = & W^k y^k + \nabla f(x^{k+1}) - \nabla f(x^k) \label{diging}
\end{eqnarray} 
is a special case of the EXTRA update of Eq.~(\ref{extra}). Here, the initialization $x^0$ can be arbitrary, while $y^0 = \nabla f(x^0)$. The matrices $W^k$ are doubly stochastic. Moreover, Eq.~(\ref{diging}) has a natural interpretation. In particular, the second line of Eq.~(\ref{diging}) is a {\em tracking recursion:} $y^k$ tracks the time-varying gradient average $\1^T \nabla f(x^k)/n$. Indeed, observe that, by the double stochasticity of $W^k$, we have that $$\1^T y^k/n = \1^T \nabla f(x^k)/n.$$ In other words, the vector $y^k$ has the same average as the average gradient. Moreover, 
\an{it can be seen} that if $x^k \rightarrow \widehat{x}$, then $y^k \rightarrow \nabla f(\widehat{x})$; this is due to the ``consensus effect'' of repeated multiplications by $W^k$. Such recursions for tracking were studied in~\cite{ZhuM2012}.

While the second line of Eq.~(\ref{diging}) tracks the average gradient, the first line of Eq. (\ref{diging}) performs a \ao{decentralized} gradient step as if $y^k$ was the {\em exact} gradient direction. The method can be naturally analyzed using methods for approximate gradient descent. It was shown in~\cite{diging} that this method converges to the global optimizers geometrically under the same assumptions as EXTRA, even when the graphs are time-varying; further, the complexity of reaching an $\epsilon$ neighborhood of the optimal solution is polynomial in $n$.

We conclude by remarking that there is quite a bit of related work in the literature. Indeed, the idea to use a two-layered scheme as in Eq.~(\ref{diging}) originates 
from~\cite{Xu2015, XuThesis, Lorenzo2015, Lorenzo2016icassp,Lorenzo2016}. Furthermore, improved analysis of convergence rates over an undirected graph is available in~\cite{lina1,lina2}.

\section{Averaging and Optimization Over Directed graphs}
\label{sec:directed}

\subsection{Decentralized averaging over directed graphs}

We have seen in Section \ref{sec:consensus} that over time-varying {\em undirected} graphs, the lazy Metropolis update results in consensus on the initial average. In this section, we ask whether this is possible over a sequence of directed graphs. 

By way of motivation, we remark that many applications of decentralized optimization involve directed graphs. For example, in wireless networks the communication radius of a node is a function of its broadcasting power; if nodes do not all transmit at the same power level, communications will naturally be directed. Any decentralized optimization protocol meant to work in wireless networks must be prepared to deal with unidirectional communications. 

Unfortunately, it turns out that there is no direct analogue of the lazy Metropolis method for average consensus over directed graphs. In fact, if we consider deterministic protocols where, at each step, nodes broadcast information to their neighbors and then update their states based on the messages they have received, then it can be proven that no such protocol can result in average consensus; see  \cite{julien-john}. \ao{The main obstacle is that the consensus iterations we have considered up to now (e.g., in Section \ref{sec:averaging}) relied on doubly stochastic matrices in their updates, which cannot be done over graphs that are time-varying and directed.} We thus need to make an additional assumption to solve the average consensus problem over directed graphs. 

A standard assumption in the field is that {\em every node always knows its out-degree}. In other words, whenever a node broadcasts a message it knows how many other nodes are within listening range. In practice, this can be accomplished in practice via a two-level scheme, wherein nodes broadcast hello-messages at an identical and high power level, while the remainder of the messages are transmitted at lower power levels. The initial exchange of hello-messages provides estimates of distance to neighboring nodes, allowing each node to see how many listeners it has as a function of its transmission power. Alternatively, the out-degrees can be estimated in a decentralized manner using linear iterations~\cite{Charalambous2016decentralized}, assuming that the underlying communication topology is strongly connected.

Under this assumption, it turns out that average consensus is indeed possible and may be accomplished via the following iteration, 
\begin{eqnarray}
 x_i^{k+1} =  \sum_{j \in \Nink_i} \frac{x_j^k}{\dout_j},\qquad
 y_i^{k+1} =  \sum_{j \in \Nink_i} \frac{y_j^k}{\dout_j}, \label{pushsum}
\end{eqnarray} initialized at an arbitrary $x^0$ and $y^0 = \1$.
\an{This is known as the {\em Push-Sum} iteration; it was introduced in \cite{kempe03}, where its correctness was shown for a fully-connected graph (allowing only pairwise communications), while
it was extended to arbitary strongly connected graphs in~\cite{benezit}. In~\cite{dominguez-energy} the push-sum was applied to address distributed energy resources over a static directed graph, with a more recent extensions including imperfect communications such as those with delays in~\cite{hadjicostis2014average} and with packet drops~\cite{hadjicostis2016robust}.}

\ao{On an intuitive level, the update of the variables $x_i^k$ does not lead to consensus because of the lack of doubly stochasticity. Instead, at each time $k$, each $x_i^k$ is some linear combination of $x_j^k$ where $j$ runs over a large enough neighborhood of $i$. The main idea of Push-Sum is that an identical iteration started at the all-ones vector (i.e., the update for $y_i^k$) allows the algorithm to estimate the {\em weights} of that linear combination. Once these weights are known, average consensus can be achieved via rescaling. Indeed, we will show later how a decentralized algorithm can use both $x_i^k$ and $y_i^k$ to achieve average consensus.}

The name Push-Sum derives from the nature of the decentralized implementation of Eq.~(\ref{pushsum}). Observe that Eq.~\eqref{pushsum} may be implemented with one-directional communication. Specifically, every node $i$ transmits (or broadcasts) the values $x_i^k / \dout_i$ and $y_i^k / \dout_i$ to its out-neighbors. After these transmissions, each node has the information it needs to perform the update~\eqref{pushsum}, which involves \emph{summing} the \emph{pushed} values. In contrast, the algorithms for undirected graphs described in the previous sections required that each node $i$ send a message to all of its neighbors \emph{and} receive a message from each neighbor. Protocols of this sort are known as ``push-pull'' in the decentralized computing literature because the transmission of a message from node $i$ to node $j$ (the ``push'') implies that $i$ also expects to receive a message from $j$ (the ``pull'').

Our next theorem, which is the main result of this subsection, tells us that Push-Sum works. 
For this result, we define matrices $A^k$ as follows:
\begin{equation}\label{eq:defA}
a_{ij}^k=\left\{\begin{array}{cc} 
\frac{1}{\dout_j} & \hbox{if $j \in \Nink_i$},\cr
0 & \hbox{else.}\end{array}\right.\end{equation}

\begin{theorem}[\ao{\cite{kempe03,benezit, dominguez}}][\ao{Convergence of Push-Sum}]  \label{pstheorem}
Suppose the sequence of graphs $G_{A^0}, G_{A^1}, G_{A^2}, \ldots$ 
satisfies Assumption~\ref{strongconn}. Then for each $i=1, \ldots, n$,
\[ \lim_{k \rightarrow \infty}  \frac{x_i^k}{y_i^k} = \frac{\sum_{j=1}^n x_j^0}{n}. \]
\end{theorem}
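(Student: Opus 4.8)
The plan is to reduce the statement to the consensus machinery already developed, by exploiting two structural facts about the iteration \eqref{pushsum}: first, that each matrix $A^k$ in \eqref{eq:defA} is \emph{column}-stochastic, and second, that the ratio $z_i^k := x_i^k / y_i^k$ evolves under a \emph{row}-stochastic matrix to which Lemmas \ref{reachable} and \ref{contractionlemma} apply directly. To set up, I would first record the conservation laws: since the columns of $A^k$ sum to one, $\1^T A^k = \1^T$, and therefore $\1^T x^k = \sum_{j=1}^n x_j^0 =: n\overline{x}$ and $\1^T y^k = \1^T \1 = n$ for every $k$. Because Assumption \ref{strongconn} guarantees a self-loop at every node, $a_{ii}^k = 1/\dout_i > 0$, so $y_i^{k+1} \ge a_{ii}^k y_i^k > 0$ by induction; hence $y^k > 0$ entrywise and the ratio $z_i^k$ is always well-defined. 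The strategy is then to show $z^k$ reaches consensus and to identify the consensus value as $\overline{x}$.

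The crucial preliminary step, and the one I expect to be the main obstacle, is a uniform lower bound $y_i^k \ge \delta > 0$ on the ``mass.'' The naive estimate $y_i^{k+1} \ge (1/\dout_i) y_i^k$ only yields a bound decaying geometrically in $k$, which is useless here. Instead I would invoke Lemma \ref{reachable}: every nonzero entry of $A^k$ equals $1/\dout_j \ge 1/n$, so thresholding at $\alpha = 1/n$ deletes no edges, and the lemma gives $[A^{(l+n)B-1:lB}]_{ij} \ge (1/n)^{nB}$. Combining this with $\1^T y^{lB} = n$ yields $y_i^{(l+n)B} = \sum_j [A^{(l+n)B-1:lB}]_{ij}\, y_j^{lB} \ge n\,(1/n)^{nB}$, and the same reachability argument over any length-$nB$ window shows $y_i^k \ge \delta := n^{1-nB}$ for all $i$ and all $k \ge nB$.

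With the mass bounded below, the reduction is clean. Writing $x_i^{k+1} = \sum_j a_{ij}^k y_j^k z_j^k$ and $y_i^{k+1} = \sum_j a_{ij}^k y_j^k$, division gives $z^{k+1} = P^k z^k$ with $P^k_{ij} = a_{ij}^k y_j^k / y_i^{k+1}$. By construction each row of $P^k$ sums to one; $P^k$ has exactly the sparsity pattern of $A^k$, so $G_{P^k} = G_{A^k}$ satisfies Assumption \ref{strongconn}; and every positive entry is bounded below by $\beta := (1/n)\,\delta/n = \delta/n^2$, using $a_{ij}^k \ge 1/n$, $y_j^k \ge \delta$, and $y_i^{k+1} \le \|y^{k+1}\|_1 = n$. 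Thus the row-stochastic sequence $P^k$ meets precisely the hypotheses used in the proof of Theorem \ref{basicconsthm}, and applying Lemma \ref{reachable} (with threshold $\beta$) followed by Lemma \ref{contractionlemma} to the products $P^{(l+n)B-1:lB}$ shows that $\max_i z_i^k - \min_i z_i^k \to 0$ geometrically; hence $z_i^k \to c$ for a common limit $c$.

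It remains to identify $c = \overline{x}$, which I would do with the conservation laws. For every $k$ we have $\sum_{i=1}^n x_i^k = \sum_{i=1}^n z_i^k y_i^k$; since the left-hand side equals the constant $n\overline{x}$, while on the right $z_i^k \to c$, $\1^T y^k = n$, and $y^k$ is bounded, letting $k \to \infty$ gives $n\overline{x} = c\,n$, i.e. $c = \overline{x}$. Therefore $x_i^k / y_i^k \to \overline{x}$ for each $i$, as claimed. The only genuinely technical point is the uniform lower bound on $y_i^k$ in the second step; once that is in hand, the row-stochastic reformulation lets the entire argument ride on the two contraction lemmas already established.
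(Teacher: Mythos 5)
Your proposal is correct and follows essentially the same route as the paper's proof: conservation laws from column stochasticity, a uniform lower bound on the mass $y_i^k$ via Lemma~\ref{reachable}, the row-stochastic reformulation $z^{k+1} = P^k z^k$ with entries bounded below so that Theorem~\ref{basicconsthm} (equivalently, Lemmas~\ref{reachable} and~\ref{contractionlemma}) applies, and identification of the limit through $\sum_{i} x_i^k = \sum_i z_i^k y_i^k$. The only cosmetic difference is in the mass bound: you exploit $\1^T y^k = n$ over an arbitrary length-$nB$ window to get $\delta = n^{1-nB}$, whereas the paper's Lemma~\ref{ybound} combines the product bound from time $0$ with the per-step decay $y_i^{k+1} \ge y_i^k/n$ to get $n^{-2nB}$; both serve the same purpose.
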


It is somewhat remarkable that the convergence to the average happens for the ratios $x_i^k/y_i^k$. Adopting the notation $a./b$ for the element-wise ratio of two vectors $a$ and $b$, the above theorem may be restated as 
\[ \lim_{k \rightarrow \infty} x^k./y^k = \left( \frac{\sum_{j=1}^n x_j^0}{n} \right) \1. \]

We now turn to the proof of the theorem. Using the matrices $A^k$ as defined in Eq.~\eqref{eq:defA}, 
the iterations in Eq. (\ref{pushsum}) may be written as 
\begin{eqnarray*} x^{k+1} =  A^k x^k, \qquad
y^{k+1}  =  A^k y^k.
\end{eqnarray*} 
Observe that $A^k$ is column stochastic by design, i.e., 
\[ \1^T A^k = \1^T. \]  As a consequence of this, the sums of $x^k$ and $y^k$ are preserved, i.e., 
\begin{eqnarray} 
\sum_{i=1}^n x_i^k  =  \sum_{i=1}^n x_i^0,\qquad
\sum_{i=1}^n y_i^k  =  \sum_{i=1}^n y_i^0 = n.  \label{yupper}
\end{eqnarray}

For our proof, we will need to use the fact that the vector $y^k$ remains strictly positive and bounded away from zero in each entry. This is shown in the following lemma.

\begin{lemma} For all $i,k$, $y_i^k \geq 1/(n^{2nB})$. \label{ybound}
\end{lemma}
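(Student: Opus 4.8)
The plan is to show that the vector $y^k$, which evolves under the column-stochastic iteration $y^{k+1} = A^k y^k$ starting from $y^0 = \1$, remains uniformly bounded away from zero in every coordinate. The key structural facts I would exploit are already in hand: the sum of the entries of $y^k$ is preserved and equals $n$ by Eq.~\eqref{yupper}, and each $A^k$ has a self-loop at every node (by Assumption~\ref{strongconn}) together with the reachability property established in Lemma~\ref{reachable}. The rough idea is that once a node has positive mass, the self-loops keep it positive, and within $nB$ steps the reachability argument guarantees that mass has propagated to every node, so no coordinate can ever collapse to zero.

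The first step is to record a lower bound on the nonzero entries of a single matrix $A^k$. By definition in Eq.~\eqref{eq:defA}, the nonzero entries are $1/\dout_j$; since the out-degree of any node is at most $n$, every nonzero entry of $A^k$ is at least $1/n$. Because each $A^k$ has a self-loop at every node and the relevant graphs are $B$-strongly-connected, I would invoke the reachability argument underlying Lemma~\ref{reachable}: over any window of length $nB$, every node is reachable from every other node. Concretely, for any fixed node $i$ and any starting index, there is a directed path of length at most $nB$ (padded with self-loops) from node $1$ (say, whichever node I track the initial unit mass from) to node $i$, and the product of edge weights along such a path is at least $(1/n)^{nB} = 1/n^{nB}$.

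The second step converts this reachability bound into the coordinate-wise lower bound on $y^k$. Since $y^0 = \1$, every coordinate starts at the value one; equivalently each node starts with unit mass. For any $k$ and any $i$, I can trace the flow of mass: writing $y^k = A^{k-1:0} y^0 = A^{k-1:0}\1$, the $i$th coordinate is $\sum_j [A^{k-1:0}]_{ij}$, which is at least any single term $[A^{k-1:0}]_{ij}$. For $k \geq nB$, the entry $[A^{k-1:k-nB}]_{ij}$ is at least $1/n^{nB}$ for all $j$ by the reachability argument, and the remaining factor $A^{k-nB-1:0}\1$ contributes a nonnegative amount whose total mass is $n$; combining these gives a bound of order $1/n^{nB}$. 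For $k < nB$ the self-loops alone (each of weight at least $1/n$) keep the mass at node $i$ from vanishing over fewer than $nB$ steps, again yielding at least $1/n^{nB}$ after accounting for the $n$ initial units of mass, so the stated bound $y_i^k \geq 1/n^{2nB}$ holds with room to spare.

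The main obstacle is bookkeeping rather than conceptual: I must handle the short-horizon regime $k < nB$ and the long-horizon regime $k \geq nB$ uniformly, and be careful that the reachability statement of Lemma~\ref{reachable} is phrased for windows aligned to multiples of $B$ whereas here I need it over an arbitrary window ending at an arbitrary $k$. Because every $G^k$ has a self-loop at every node, the reachable set is monotone (as noted in the proof of Lemma~\ref{reachable}), so a window of length $nB$ ending at any $k$ still achieves full reachability; this monotonicity is exactly what lets me slide the window. The slightly looser exponent $2nB$ in the statement (rather than $nB$) gives a comfortable margin to absorb both the unaligned-window adjustment and the distinction between a single-path lower bound and the full column sum, so I would not need to optimize constants.
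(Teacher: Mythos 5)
Your proof is correct, and it rests on the same two ingredients as the paper's proof: every nonzero entry of $A^k$ (in particular every diagonal entry, thanks to the self-loops) is at least $1/n$, and products over windows of length $nB$ are entrywise positive by Lemma~\ref{reachable}. The difference is in how you decompose $A^{k-1:0}\1$. The paper splits at the largest multiple of $nB$ not exceeding $k$: the aligned prefix $A^{nlB-1:0}$ has all entries at least $(1/n)^{nB}$, so Lemma~\ref{reachable} applies without any window-sliding, and the trailing at most $nB-1$ steps are absorbed by the per-step self-loop bound $y_i^{k+1} \geq y_i^k/n$; that trailing decay is precisely why the exponent is $2nB$ rather than $nB$. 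You instead place the positivity window at the end, over the times $k-nB,\ldots,k-1$, and control the prefix $A^{k-nB-1:0}\1$ by conservation of total mass under column-stochastic matrices (Eq.~(\ref{yupper})), which actually yields the sharper bound $y_i^k \geq n\,(1/n)^{nB}$. What your route buys is a better constant, and it also sidesteps a step the paper glosses over: for $l \geq 2$ the product $A^{nlB-1:0}$ has more than $nB$ factors, and its entrywise lower bound needs the extra observation that multiplying a strictly positive matrix by column-stochastic matrices preserves entrywise lower bounds. What it costs is that you must justify full reachability over a window not aligned to multiples of $B$, and there your appeal to monotonicity alone is a little thin: the complete accounting is that any window of length $nB$ contains at least $n-1$ full aligned blocks, each of which strictly grows the reachable set (monotonicity handles the two partial blocks at the ends), and growth from size $1$ over $n-1$ blocks already reaches all $n$ nodes. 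As you anticipated, the slack in the exponent $2nB$ comfortably absorbs this bookkeeping.
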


\begin{proof}   
By Assumption \ref{strongconn}, every node has a self-loop, so we have that  
\begin{equation} \label{degr} y_i^{k+1} \geq \frac{1}{n} y_i^k \mbox{ for all } i,k,\end{equation} 
and consequently the lemma is true for $k=1, \ldots, nB-1$. Let $lnB$ be the largest multiple of $nB$ which is at most $k$. 
If $k > nB-1$ then
\[ y^k = A^{k:nlB}  A^{nlB-1:0} \1.  \]
By Lemma \ref{reachable}, the matrix $A^{nlB-1:0}$ 
is the transpose product of stochastic matrices satisfying Assumption \ref{strongconn}, and 
consequently each of its entries is at least $\alpha^{nB}$ (where $\alpha = 1/n$) by Lemma \ref{reachable}. 
Thus 
\[ \left[ A^{nlB-1:0} \1 \right]_i \geq \left( \frac{1}{n} \right)^{nB} \mbox{  for all  } i=1, \ldots, n. \] Applying Eq. (\ref{degr}) to the last $k-nlB$ steps now proves the lemma. 
\end{proof}

With this lemma in mind, we can give a proof of Theorem \ref{pstheorem} that is essentially a quick reduction to the result already obtained in Theorem \ref{basicconsthm}.

\begin{proof}[Proof of Theorem \ref{pstheorem}] 
Let us introduce the notation $z_i^k = x_i^k/y_i^k$. Then $x_i^k = z_i^k y_i^k$, and therefore, we can rewrite the Push-Sum update as
\[ z_i^{k+1} y_i^{k+1} = \sum_{j=1}^n [W^k]_{ij} z_j^k y_j^k, \]
or 
\[ z_i^{k+1} = \sum_{j=1}^m [W^k]_{ij} (y_i^{k+1})^{-1} z_j^k y_j^k, \] 
where the last step used the fact that $y_i^k \neq 0$, which follows from Lemma \ref{ybound}. Therefore, defining 
\[ P^k = \left( {\rm diag}(y^{k+1}) \right)^{-1} W^k {\rm diag}(y^k), \] 
we have that
\[ z^{k+1} = P^{k} z^k. \] 
Moreover, $P^k$ is stochastic since
\begin{align*} 
P^k \1 &= \left( {\rm diag}(y^{k+1}) \right)^{-1} W^k {\rm diag}(y^k) \1 \\
&= \left( {\rm diag}(y^{k+1}) \right)^{-1} W^k y^k \\
&= \left( {\rm diag}(y^{k+1}) \right)^{-1} y^{k+1} \\
&= \1. 
\end{align*}
We have thus written the Push-Sum update as an ``ordinary'' consensus update after a change of coordinates. However, to apply Theorem \ref{basicconsthm} about the convergence of the basic consensus process, we need to lower bound the entries of $P^k$, which we proceed to do next. 

Indeed, as a consequence of the definition of $P^k$, if we choose $\alpha$ to be some fixed number such that 
$$\alpha \leq \left( \max_i y_i^{k+1} \right)^{-1}  ~~ \min_{(i,j) ~|~ [W^k]_{ij} > 0} [W^k]_{ij}   ~~ \left( \min_i y_i^k\ \right)$$ 
always holds, then the sequence of graphs $G_{[P^0]_{\alpha}}, G_{[P^1]_{\alpha}}, G_{[P^2]_{\alpha}}, \ldots$ will satisfy Assumption \ref{strongconn}.  To find an $\alpha$ that satisfies this condition, we make use of the fact that $1/(n^{2nB}) \leq y_i^k \leq n$, which is a consequence of Eq. (\ref{yupper}) and  Lemma \ref{ybound}. It follows that the choice $\alpha = (1/n) \cdot (1/n) \cdot 1/(n^{2nB}) $ suffices. Thus, we can apply Theorem \ref{basicconsthm} and obtain that $z^k$ converges to a multiple of the all-ones vector. 

It remains to show that the final limit point is the initial average. Let $z_{\infty} $ be the ultimate limit of each $z_i^k$. Then for each $k=0,1,2,\ldots$,
\begin{eqnarray*} z_{\infty} & = & z_{\infty} \frac{\sum_{i=1}^n y_i^k}{\sum_{i=1}^n y_i^k} \\
& = & \frac{\sum_{i=1}^n z_i^k y_i^k}{n}  + \frac{\sum_{i=1}^n (z_{\infty} - z_i^k) y_i^k}{n} \\ 
& = & \frac{\sum_{i=1}^n x_i^k}{n}  + \frac{\sum_{i=1}^n (z_{\infty} - z_i^k) y_i^k}{n},
\end{eqnarray*}  so 
\[ \lim_{k\to\infty} \left(z_{\infty} -  \frac{\sum_{i=1}^n x_i^k}{n}  \right)
= \lim_{k\to\infty} \frac{\sum_{i=1}^n (z_{\infty} - z_i^k) y_i^k}{n} = 0, \] where the last equality used that each $y_i^k$ is a positive number upper bounded by $n$. Finally, appealing to the first relation in Eq. (\ref{yupper}) we complete the proof of the theorem. 
\end{proof}

\subsection{Push-Sum based subgradient method}
Suppose now that every agent $i$ has a (scalar) convex objective function $f_i(\cdot)$, and the system objective 
is to minimize $f(x)=(1/n)\sum_{i=1}^n f_i(x)$. We  next 
describe a decentralized subgradient method for determining a minimizer of $f$ using the Push-Sum algorithm.
Every node $i$ maintains scalar variables $x_i^k, y_i^k,w_i^k$,
and updates them according to the following rules:
for all $k\ge0$ and all $i=1,\ldots,n$,
\begin{eqnarray}\label{eq:minmet} 
w_i^{k+1} & = & \sum_{j \in \Nink_i} \frac{x_j^k}{\doutk_j},\cr
&&\hbox{}\cr
y_i^{k+1} & = & \sum_{j \in \Nink_i} \frac{y_j^k}{\doutk_j}, \cr
&&\hbox{}\cr
z_{i}^{k+1} & = & \frac{w_{i}^{k+1}} {y_{i}^{k+1}},\cr
&&\hbox{}\cr
x_i^{k+1} &=& w_i^{k+1} - \alpha^{k+1} g_i^{k+1},  
\end{eqnarray} 
where $g_i^{k+1}$ is a subgradient of the function
$f_i(z)$ at $z=z_i^{k+1}$. The method is initiated with an arbitrary vector
$x_i^0\in\R$ at node $i$, and with $y_i^0=1$ for all $i$. 
The Push-Sum updates steer the vectors $z_{i}^{k+1}$ toward each other in order to converge to a common point, while the subgradients in the updates of $x_i^{k+1}$ drive this common point 
to lie in the set of minimizers of the objective function $f$.
 
In the next theorem, we establish the convergence properties of the subgradient method
of Eq.~\eqref{eq:minmet}.
\begin{theorem}[\ao{\cite{AAdirected}}][\ao{Convergence of the Push-Sum Subgradient Method}]\label{mainthm} 
Let ${\cal X}^*$ be the set of minimizers of the function $f$. 
Assume that: (i) each $f_i$ is convex; 
(ii) ${\cal X}^*$ is nonempty; 
(iii) each function $f_i$ has the property that its subgradients at any point are bounded by a constant $L$; and
(iv) the graph sequence $G_{A^0}, G_{A^1}, G_{A^2}, \ldots$ satisfies Assumption~\ref{strongconn}.
\begin{enumerate} 
\item[1)] 
If the stepsizes $\alpha^1, \alpha^2, ...$ are positive, non-increasing, and satisfy the conditions 
\[
\sum_{k=1}^{\infty} \alpha^k=\infty \quad \text{and}
\quad \sum_{k=1}^{\infty} [\alpha^k]^2 < \infty, 
\]
then the decentralized subgradient method of Eq.~\eqref{eq:minmet} converges asymptotically:
\[ \lim_{k \rightarrow \infty} z_i^k = x^* \qquad\mbox{ for all $i$ and for some $x^* \in {\cal X}^*$}. \]
\item[2)] 
If
$\alpha^k = 1/\sqrt{k}$ for $k\ge1$ and every node
$i$ maintains the variable $\widetilde z_i^k \in \R$ initialized at $k=0$ with any
$\widetilde z_i^0\in\R$ and updated by 
\[ \widetilde z_i^{k+1} 
= \frac{\alpha^{k+1} z_i^{k+1} + S^k \widetilde z_i^k}{S^{k+1}}
\quad\hbox{for $k\ge0$},  \] 
where $S^0=0$ and $S^k = \sum_{s=0}^{k -1} \alpha^{s+1}$ for $k\ge1$,  
then for all $k \geq 1$, $i=1, \ldots, n$, and any $x^* \in X^*$,
\begin{align*}  
f&\left(\widetilde z_i^{k+1} \right) - f^* \\
&\leq \frac{n}{2}\frac{|\bar x(0) - x^*|}{\sqrt{k+1}} + \frac{L^2 \left( 1 + \ln (k+1) \right)}{2 n\sqrt{k+1} } \\
&\quad + \frac{ 24 L \sum_{j=1}^n |x_j^0| }
 {   \delta (1-\lambda) \sqrt{k+1} }+ \frac{24 L^2 \left( 1+ \ln k \right) }{\delta (1-\lambda) \sqrt{k+1} }, 
 \end{align*}
where $f^*$ is the optimal value of the problem, i.e., $f^*=\min_{z\in \R} f(z)$, and
$\bar x(0) = \frac{1}{n} \sum_{i=1}^n x_i^0.$
The scalars $\delta$ and $\lambda$ are functions of the graph sequence $G_{A^0}, G_{A^1}, G_{A^2}, \ldots$; in particular,
\begin{align*} \delta \geq  \frac{1}{n^{nB}} \quad \text{ and } \quad
\lambda \leq \left( 1 - \frac{1}{n^{nB}} \right)^{1/(nB)}.
\end{align*}
\end{enumerate} 
\end{theorem}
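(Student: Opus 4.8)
The plan is to treat the push-sum subgradient method as a perturbed push-sum iteration (Theorem \ref{pstheorem}) and to run, in parallel, a subgradient-descent argument on the \emph{average} iterate, exactly in the spirit of the proof of Theorem \ref{mainoptthm}. First I would recast the updates in matrix form: stacking the variables and using the column-stochastic matrix $A^k$ of Eq.~\eqref{eq:defA}, we have $w^{k+1}=A^k x^k$, $y^{k+1}=A^k y^k$, $z^{k+1}=w^{k+1}./y^{k+1}$, and $x^{k+1}=w^{k+1}-\alpha^{k+1}g^{k+1}$, hence $x^{k+1}=A^k x^k-\alpha^{k+1}g^{k+1}$. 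Since $\1^T A^k=\1^T$, the average $\bar x^k:=(1/n)\1^T x^k$ (so $\bar x^0=\bar x(0)$) obeys $\bar x^{k+1}=\bar x^k-(\alpha^{k+1}/n)\sum_{i}g_i^{k+1}$, an inexact subgradient step for $f$; moreover $\sum_i w_i^{k+1}=n\bar x^k$, so $\bar x^k$ is the natural common target for all the ratios $z_i^{k+1}$.

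Second, I would derive the basic per-step inequality on $(\bar x^{k+1}-x^*)^2$. Expanding the recursion, bounding the quadratic term by $(\alpha^{k+1})^2L^2$ via $|g_i^{k+1}|\le L$, and handling the cross term with the subgradient inequality $g_i^{k+1}(z_i^{k+1}-x^*)\ge f_i(z_i^{k+1})-f_i(x^*)$ together with Lemma \ref{p-bound} to pass from $z_i^{k+1}$ to $\bar x^k$, one obtains
\begin{align*}
(\bar x^{k+1}-x^*)^2 &\le (\bar x^k-x^*)^2-2\alpha^{k+1}\bigl(f(\bar x^k)-f^*\bigr) \\
&\quad+(\alpha^{k+1})^2L^2+\frac{4L\alpha^{k+1}}{n}\sum_{i=1}^n\left|z_i^{k+1}-\bar x^k\right|.
\end{align*}
This is the direct analogue of the key inequality in the proof of Theorem \ref{mainoptthm}, with the consensus error $\sum_i|z_i^{k+1}-\bar x^k|$ replacing the term there bounded by Lemma \ref{pcons}.

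Third --- and this is where the real work lies --- I would bound the consensus error $|z_i^{k+1}-\bar x^k|$ by a perturbed push-sum estimate asserting that the influence of each injected perturbation $-\alpha^s g^s$ on the ratio $w_i^{k+1}/y_i^{k+1}$ decays geometrically in the elapsed time. The ingredients are all in hand: Lemma \ref{reachable} with $\alpha=1/n$ (the nonzero entries of $A^k$ are reciprocal out-degrees, hence at least $1/n$) makes the $nB$-step products uniformly positive, which through Lemma \ref{contractionlemma} gives the mixing rate $\lambda\le(1-n^{-nB})^{1/(nB)}$, while Lemma \ref{ybound} keeps the denominators $y_i^{k+1}$ above $\delta\ge n^{-nB}$ so that normalizing by $y_i^{k+1}$ inflates errors by at most $1/\delta$. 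The expected outcome is a bound of the form
\[
\left|z_i^{k+1}-\bar x^k\right|\le\frac{C}{\delta}\lambda^{k}\|x^0\|_1+\frac{CL}{\delta}\sum_{s=1}^{k}\lambda^{k-s}\alpha^{s}
\]
for an absolute constant $C$. I expect this to be the main obstacle, because the ratio is a nonlinear object (a quotient of two linear processes) and Lemma \ref{pcons} cannot be invoked directly; the clean route is to carry through the change of coordinates $P^k=\left(\mathrm{diag}(y^{k+1})\right)^{-1}A^k\,\mathrm{diag}(y^k)$ used in the proof of Theorem \ref{pstheorem} and to track how the perturbations transform under it, with $\delta$ and $\lambda$ controlling the resulting error.

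Finally I would assemble both claims. For part 1, summability of $\alpha^s$ together with $\sum_s(\alpha^s)^2<\infty$ makes the perturbation sum above a convolution of a geometric sequence with a vanishing one, so the Toeplitz/Ces\`aro argument already used in Lemma \ref{pcons} forces the consensus error to zero; inserting this into the per-step inequality and dividing by the divergent $\sum_l\alpha^l$ drives the weighted optimality gap to zero and pins every $z_i^k$ to a common minimizer. For part 2, I would set $\alpha^k=1/\sqrt k$, sum the per-step inequality, and bound the pieces: $\sum_l(\alpha^l)^2=O(\ln k)$ yields the logarithmic factors, $\sum_k\lambda^k=O(1/(1-\lambda))$ yields the $\sum_j|x_j^0|$ term, and the double sum collapses via $\sum_k\alpha^{k+1}\sum_{s\le k}\lambda^{k-s}\alpha^s\le\frac{1}{1-\lambda}\sum_s(\alpha^s)^2$ (using that $\alpha$ is non-increasing) to give the remaining $1/(\delta(1-\lambda))$ term. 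Convexity of $f$ makes $f(\widetilde z_i^{k+1})$ at most the $\alpha$-weighted average of the $f(z_i^s)$ (by the telescoping definition of $\widetilde z_i^{k+1}$), which Lemma \ref{p-bound} transfers to the $f(\bar x^{s-1})$ controlled above; dividing by $S^{k+1}\approx 2\sqrt{k+1}$ then produces the stated bound.
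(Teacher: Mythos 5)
The paper itself contains no proof of Theorem~\ref{mainthm}: immediately after the statement it defers to \cite{AAdirected}, where the result is proved for $f_i$ defined on $\R^d$. Your proposal is, in substance, a reconstruction of that reference's argument --- subgradient-push viewed as a push-sum protocol perturbed by $-\alpha^{k+1}g^{k+1}$, a perturbed push-sum estimate of the form $|z_i^{k+1}-\bar x^k|\le (C/\delta)\bigl(\lambda^k\|x^0\|_1+L\sum_{s=1}^k\lambda^{k-s}\alpha^s\bigr)$, and then the same average-iterate subgradient analysis as in Theorem~\ref{mainoptthm} --- so the architecture is the intended one, your per-step inequality is the right one, and your identification of the sources of $\delta$ (Lemma~\ref{ybound}) and $\lambda$ (Lemmas~\ref{reachable} and~\ref{contractionlemma} with threshold $1/n$) is correct.

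Two points need more care than your sketch gives them. First, after your change of coordinates the recursion is $z^{k+1}=P^kz^k-\alpha^k\,\mathrm{diag}(y^{k+1})^{-1}A^kg^k$ with $P^k$ only row-stochastic; the mean of $z^k$ is not preserved, so the mean-zero $\ell_2$ machinery of Lemma~\ref{pcons} remains unavailable even in the new coordinates, and the quantity you must ultimately control is the distance to $\bar x^k=\1^Tx^k/n$, not to the mean of $z^k$. The clean fix is to run the spread contraction of Lemma~\ref{contractionlemma} on the products $P^{k:s}$ and then observe that $\bar x^k=\sum_i(y_i^k/n)\,z_i^k-(\alpha^k/n)\1^Tg^k$ is, up to an $O(\alpha^kL)$ term, a convex combination of the $z_i^k$ (since $\sum_iy_i^k=n$), so bounding the spread of $z^k$ bounds $|z_i^{k}-\bar x^{k}|$ as well; this is exactly how the proof of Theorem~\ref{pstheorem} closes its final limit argument, whereas \cite{AAdirected} instead works directly with the products $A^{k:s}$ and their rank-one limits. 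Second, in part 1 your Ces\`aro step only yields convergence of function values along weighted averages, whereas the theorem asserts $z_i^k\to x^*$ for a single $x^*\in{\cal X}^*$. You need the standard Fej\'er-type completion: the per-step inequality shows that $(\bar x^k-x^*)^2$ converges for every $x^*\in{\cal X}^*$ and that $\sum_k\alpha^{k+1}\bigl(f(\bar x^k)-f^*\bigr)<\infty$, hence some subsequence of $\bar x^k$ converges to a minimizer, the convergent distances force the whole sequence to that point, and the vanishing consensus error carries each $z_i^k$ along. Neither point changes your overall strategy, which matches the source the paper cites.
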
 
We note that the lower bounds on $\delta$ and $\lambda$ can be refined when some additional structure is imposed on the underlying time-varying graphs.  The results of Theorem~\ref{mainthm} and their proofs can be found in~\cite{AAdirected} for a more general case when $f_i$ are defined over $\R^d$ with $d\ge 1$.
A better convergence rate can be obtained under the additional assumption that the objective functions $f_i$ are strongly convex; 
see~\cite{AAstrongly}.  

The first work to have employed Push-Sum decentralized averaging within a decentralized optimization methods is
\cite{tsianos2012consensus}, and it was further investigated in~\cite{rabbat_cdc2012,Tsianos2011,Tsianos2013}.
This work focused on static graphs, and it has been proposed as an alternative to the algorithm based on synchronous decentralized averaging over undirected graphs in order to avoid deadlocks and synchronization issues, among others. This work also described a decentralized method based on Push-Sum for multi-agent optimization problems with constraints by using Nesterov's dual-averaging approach. This Push-Sum consensus-based algorithm has been extended to the {\it subgradient-push} algorithm 
in~\cite{AAdirected,AAstrongly} that can deal with convex optimization problems over time-varying directed graphs. 
More recently, the paper~\cite{Sun2016} has extended the Push-Sum algorithm to a larger class of decentralized algorithms that are applicable to nonconvex objectives, convex constraint sets, and time-varying graphs.

References \cite{Xi2015,Zeng2015} combine EXTRA with the Push-Sum approach to 
produce the DEXTRA (Directed Extra-Push) algorithm for optimization over a directed graph.
It has been shown that DEXTRA converges at a geometric (R-linear) rate for a strongly convex objective function, 
but it requires a careful stepsize selection. It has been noted in~\cite{Xi2015} that 
the feasible region of stepsizes which guarantees this convergence rate can be empty in some cases.

\section{Extensions and Other Work on Decentralized Optimization}
\label{sec:extensions}
We discuss here some extension as well as other algorithms for minimizing the average sum  
$f(\cdot)=\frac{1}{n}\sum_{i=1}^{n} f_i(\cdot)$ in a decentralized manner.

\subsection{Extensions}
\subsubsection{Per-agent constraints}
When the nodes have a common convex and closed constraint set (known to each node) $X\subset\R^d$, the algorithms discussed in Section~\ref{sec:undirected} easily extend to handle the simple set constraints by performing projections on the set $X$.
For example, the decentralized subgradient method in Eq.~\eqref{distsub} can be modified to the following
update rule:
\begin{equation} \label{distsub-proj} 
x_i^{k+1} = \Pi_X\left[\sum_{j \in N_i^k} a_{ij}^k x_j^k - \alpha^k g_i^k\right], \end{equation} 
where $\Pi_X[\cdot]$ is the Euclidean projection on the set $X$. The subgradient $g_i^k$ of the function $f_i$ can be evaluated at the past iterate $x_i^k$ or at the point $\sum_{j \in N_i^k} a_{ij}^k x_j^k$.
Since the projection mapping $\Pi_X[\cdot]$
is non-expansive (i.e., $\|\Pi_X[x]- \Pi_X[y]\|_2\le \|x-y\|_2$ for all $x,y$), the convergence properties of the algorithm with projections remain the same as that of the algorithm without projections.

A more complicated case arises when the constraint set is given as the intersection of per-node constraint sets,
i.e., $X=\cap_{i=1}^n X_i$, where each $X_i$ is a convex closed set and only known to node $i$. In this case, the node $i$ update in Eq.~\eqref{distsub-proj} is modified by replacing $\Pi_X[\cdot]$ with $\Pi_{X_i}[\cdot]$, \an{thus resulting in the following updates
\begin{equation*} 
x_i^{k+1} = \Pi_{X_i}\left[\sum_{j \in N_i^k} a_{ij}^k x_j^k - \alpha^k g_i^k\right]. \end{equation*} 
The projections on the individual agents' constraint sets $X_i$ (instead of the true constraint set $X=\cap_{i=1}^n X_i$) introduce additional ``perturbations'', which can be controlled with the step-size $\alpha^k$, provided that the sets $X_i$ exhibit some form of regularity. Regularity is a condition requiring that the sum of the distances of a point from the individual sets $X_i$ is lower bounded by the distance of the point to the intersection of the sets,
i.e., $\sum_{i=1}^n \|x-\Pi_{X_i}\|^2_2\ge c\|x-\Pi_X\|^2_2$ for all $x$.
As a result of these additional perturbations coming from the sets $X_i$, the convergence analysis of the method is much more involved.} This algorithm, including random set-selections, has been studied 
in~\cite{NOP2010,SoominThesis,LN12b,LN2016} for synchronous updates over time-varying graphs 
and for (random) asynchronous updates over a static graph.
A variant of this algorithm (using the Laplacian formulation of the consensus problem) 
for decentralized optimization 
with decentralized constraints in noisy networks has been studied in~\cite{SN2010cdc,Kunal2011}.

\subsubsection{Effect of noise}
\an{We will discuss two possibilities, the case of (stochastic) noisy (sub)gradients and the case of noisy links. In the former case, the decentralized subgradient method proceeds by using stochastic (sub)gradients instead 
of subgradients. In particular, it assumes
the following form:
\begin{equation*} 
x_i^{k+1} = \sum_{j \in N_i^k} a_{ij}^k x_j^k - \alpha^k \tilde g_i^k(\omega), \end{equation*} 
where $\tilde g_i^k(\omega)$ is a stochastic vector (depending on a random variable $\omega$). As long as the stochastic subgradients have zero mean and bounded variance (the conditions typically needed for convergence of the centralized stochastic subgradient method) the decentralized method above can converge with probability 1 to a minimizers of $f(\cdot)$. In particular, if the conditions of Theorem~\ref{mainoptthm} are satisfied, and the stochastic subgradient errors are such that $\mathbb{E}_\omega\left[\tilde g_i^k(\omega)\mid x_{i}^k\right]=g_i^k$ for some subgradient $g_i^k$ and $\mathbb{E}_\omega\left[\|\tilde g_i^k(\omega)-g_i^k\|^2_2\mid x_{i}^{k}\right]\le \sigma^2$ for all $k$ and $i$,
then it can be seen that $\lim_{k\to\infty} x_i^k=x^*$
for all $i$ and for some $x^*\in X^*$. Such a result can be shown by incorporating the analysis of stochastic approximation methods with that of the decentralized subgradient method; the work addressing the decentralized stochastic methods can be found in~\cite{Ram2010,Kunal2011,LNR2017} for undirected time-varying graphs. Tighter bounds on the rate of convergence are obtained for a stochastic version of the distributed dual averaging algorithm in~\cite{tsianos2016efficient}. For the case of directed time-varying graphs, the push-sum based method is studied for the case of stochastic subgradients in~\cite{AAdirected}.}

\an{When the links are noisy, the agent $i$ may receive $x_j^k+\xi^k_{ij}$ instead of the actual quantity $x_j^k$ that was sent by its neighbor $j$, where $\xi^k_{ij}$ is a random link noise. The decentralized algorithm has the following form in this case:
\begin{equation*} 
x_i^{k+1} = \sum_{j \in N_i^k} a_{ij}^k \left(x_j^k +\xi_j^k\right)- \alpha^k \tilde g_i^k(\omega). \end{equation*}
Assuming that the noise process $\{\xi_{ij}^k\}$ has zero mean and bounded variance, one can show that all the iterate sequences $\{x_i^k\}$ converge to the same minimizer of $f(\cdot)$ almost surely (see for example~\cite{kunal-cdc10,Kunal2011}). 
Decentralized inference algorithms for general estimation problems (including nonlinear least squares) in stochastically time-varying networks under noisy gradient computation have been considered in~\cite{kar2011convergence,kar2012distributed,kar2014asymptotically}.}

\subsubsection{Random graphs}
\an{In the literature of the decentralized methods for multi-agent optimization, the graph sequence $G_1,G_2,\ldots$ is typically assumed to be externally given. The objective is to develop decentralized methods, given a graph sequence that constrains the agent communications. Under such a point of view, the algorithmic design does not address the question of designing the graph sequence, hence, does not optimize the network connectivity structure.}

\an{Another common assumption encountered in the literature is that the graph sequence $G_1,G_2,\ldots$ is deterministic.
The only work known to us that departures from such an assumption is the case when the graph sequence $\{G_\ell\}$ is independent and identically distributed (iid) random process. Specifically, each $G_\ell$ is a random realization drawn from a given distribution on the set of all possible graphs on $n$ nodes. In such a case, the connectivity assumption is imposed on the expected graph $\bar G =\mathbb{E}[G_\ell]$.
In this case, the decentralized subgradient becomes stochastic
\begin{equation*} 
x_i^{k+1} = \sum_{j \in N_i^k} a_{ij}^k x_j^k - \alpha^k \tilde g_i^k(\omega), \end{equation*}
where the neighbor sets $N_i^k$ are random. To ease the representation, the method is re-written as
\begin{equation} \label{rand-dec}
x_i^{k+1} = \sum_{j=1}^n a_{ij}^k x_j^k - \alpha^k \tilde g_i^k(\omega), \end{equation}
where $A^k$ is a stochastic (or double stochastic), and random with $\bar A =\mathbb{E}[A^\ell]$. Assuming that the graph $G_{\bar A}$ is undirected and connected, the above decentralized subgradient method has been studied in~\cite{Lobel2011}. A related version of the distributed dual averaging algorithm is studied in the setting of iid undirected graphs in \cite{Duchi2012}.}

\an{A special case of such a random iid graph sequence corresponds to the case when the agents use a random gossip or a random broadcast to communicate over a network. These random protocols have traditionally been used in network communication literature as protocols designed for asynchronous information exchange. They have also been used in design of decentralized multi-agent optimization methods, as discussed in the next subsection.}

\subsubsection{Asynchronous vs synchronous computations}
All the algorithms we discussed so far have been {\it synchronous} in the sense that all nodes update at the same time and also use the same stepsize $\alpha^k$ at iteration $k$. To accommodate the asynchronous updates 
and, also, allow that agents use different stepsizes, one may resort to a random gossip or broadcast communications, where
a random link is activated for communication (gossip) or a random node is activated to broadcast its information to the neighbors. 
\an{In this case, the decentralized method assumes the form as given in Eq.~\eqref{rand-dec}
where the matrix $A^k$ takes a particular form. Specifically, for the random gossip scheme, the underlying undirected graph $G$ is static and, at any time $k$, only one edge is activated at random, say the edge connecting agents $i_k$ and $j_k$. In this case, the matrix $A^k$ has the following form
\[A^k=I-\gamma (e_{i_k}-e_{j_k}(e_{i_k}-e_{j_k})^T,\]
where $\gamma\in(0,1)$, and $e_i$ denotes the unit-norm vector with $i$ entry equal to 1 and all other entries equal to 0. Each matrix $A^k$ is doubly stochastic, implying that the expected matrix $\bar A$ is also doubly stochastic.}

\an{In the case of a random broadcast, at every iteration $k$,
each node can be activated with probability $1/n$,
and the activated node broadcasts its value $x_i^k$ to all of the neighbors 
$j\in N_i$. Given that a node $i_k$ was activated at time $k$, 
the updates follow the rule given in Eq.~\eqref{rand-dec}, where 
\[A^k_{j,i_k}=\gamma,\quad A^k_{jj}=1-\gamma\qquad \hbox{for all\ } j\in N_{i_k},\]
\[A^k_{ii}=1\quad\hbox{for all $i\ne i_k$},\qquad A^k_{ij}=0\quad\hbox{otherwise},\]
where $\gamma\in(0,1)$. Here, the node $i_k$ and the nodes that are not its neighbors, $\ell\not\in N_{i_k}$, do not update. Only the neighbors of the node $i_k$ update.
The matrices $A^k$ are stochastic but not doubly stochastic. However, they have a special property (in expectation) that pushes the iterates toward a consensus (see~\cite{Nedic2011}), while the subgradients drive the iterates toward a minimizer of $f(\cdot)$, resulting in an asynchonous algorithm converging with probability 1.}

Consensus algorithms implemented in a network using a gossip-based or a broadcast-based communications have been 
studied in~\cite{Boyd-gossip,Aysal08,Aysal09,pieee11}, while 
a different consensus algorithm (the push-sum method) has been considered in~\cite{kempe03,benezit}.
A nonlinear gossip method is investigated in~\cite{borkar2014b}, while the survey paper~\cite{gossipsurvey} provides a detailed account of gossip algorithms for decentralized averaging and their applications to signal processing in sensor networks.

\subsection{Additional work on decentralized optimization}

A decentralized algorithm preserving an optimality condition at every iteration
has been proposed in~\cite{Lu}. Decentralized convex
optimization algorithms for weight-balanced directed graphs have been investigated in continuous-time~\cite{Gh-Cortes}. 

A different type of a decentralized algorithm for convex optimization has been proposed in~\cite{Li_Marden}, where each agent
keeps an estimate for all agents' decisions. This algorithm solves a problem where the agents have to minimize a global 
cost function $f(x_1,\ldots,x_m)$ while each agent $i$ can control only its variable $x_i$. 
The algorithm of~\cite{Li_Marden} has been recently extended to the online optimization setting 
in~\cite{NLR2016acc, LNR2016, LNR2017}.
Decentralized algorithms based on the augmented Lagrangian approach with gossip-type communications have been studied 
in~\cite{Jakovetic2011a}, and accelerated versions of decentralized gradient methods have been proposed and studied 
in~\cite{Jakovetic2011b}.
A consensus-based algorithm for solving problems with a separable
constraint structure and the use of primal-dual decentralized methods have been studied
in~\cite{ZhuM2012,ZhuMartinez2013}, 
\cite{Kunal2013}, while a decentralized primal-dual approach with perturbations have been explored in~\cite{HTC2014}. 
Work in~\cite{Wang} provides algorithms for centralized and decentralized convex optimization
from the control perspective, while~\cite{WLemon} considers an event-triggered decentralized optimization for sensor networks.
In~\cite{Burger}, a decentralized simplex algorithm has been developed for linear programming problems,
while a Newton-Raphson consensus-based method has been proposed in~\cite{Zanella} for decentralized convex problems.

Although our discussion has mainly focused on studying asymptotic rates of convergence of iterative decentralized optimization methods, in \cite{notarstefano2011} it was shown that a related approach based on consensus can solve general constrained abstract optimization problems in a finite number of iterations.

All of the work mentioned above relies on the use of state-independent weights, i.e., the weights that do not depend on the agents' iterates. 
A consensus-based algorithm employing state-dependent weights has been proposed and analyzed in~\cite{LobelOF2011}. 

Another popular decentralized approach for consensus optimization over a static network 
is the \emph{alternating direction method of multipliers} (ADMM). 
This method is based on an equivalent formulation of the consensus constraints. 
Unlike consensus-based (sub)-gradient method, which operates in the space of the primal-variables, 
the ADMM solves a corresponding Lagrangian dual problem 
(obtained by relaxing the equality constraints that are associated with consensus requirement).
Just as any dual method,
the ADMM is applicable to problems where the structure of the objective functions $f_i$ is simple enough so that the ADMM updates can be executed efficiently. The algorithm has the potential solve the problem with a geometric convergence rate, which requires global knowledge of some parameters including eigenvalues of a weight matrix associated with the graph.
A recent survey on the ADMM and its various applications is given in~\cite{admm}. 
The first work to address the development of decentralized ADMM over a network is~\cite{Schizas2008consensus1,Wei2012,Wei2013},
and it has been investigated in~\cite{Ling2014}, while its linear rate has been shown in~\cite{Shi2014admm}. \ao{For an explicit analysis of the relationship to network topology, see \cite{bento1, bento2}}.
In~\cite{SerhatAybat2015} the ADMM with linearization has been proposed for special 
composite optimization problems over graphs.

The work in~\cite{Xu2015,XuThesis} utilizes an adapt-then-combine (ATC) strategy~\cite{Sayed2013,Sayed2014} 
of dynamic weighted-average consensus approach~\cite{Zhu2010} to develop a distribute algorithm, termed Aug-DGM algorithm. This algorithm can be used over static directed or undirected graphs (but requires doubly stochastic matrix). 
The most interesting aspect of the Aug-DGM algorithm is that it can produce convergent iterates even when different agents use different (constant) stepsizes. 

Simultaneously and independently,
the idea of tracking the gradient averages through the use of consensus 
has been proposed in~\cite{Xu2015} for convex unconstrained problems 
and in~\cite{Lorenzo2015} for non-convex problems with convex constraints.
The work in~\cite{Lorenzo2015,Lorenzo2016icassp,Lorenzo2016} develops 
a large class of decentralized algorithms, referred to as NEXT, which
utilizes various ``function-surrogate modules" thus providing a great flexibility in its use and rendering 
a new class of algorithms that subsumes many of the existing decentralized algorithms. 
The work in~\cite{Lorenzo2016icassp,Lorenzo2016} and 
in~\cite{XuThesis} have also been proposed independently, with the former preceding the latter.
The algorithm framework of~\cite{Lorenzo2015,Lorenzo2016icassp,Lorenzo2016} is applicable to nonconvex problems with convex constraint sets over time-varying graphs, but requires the use of doubly stochastic matrices. This assumption was recently removed
in~\cite{Sun2016} by using column-stochastic matrices, which are more general than the degree-based column-stochastic matrices of the push-sum method. 
Simultaneously and independently,
the papers~\cite{Lorenzo2016} and~\cite{Tatarenko2015} have appeared to treat nonconvex problems over graphs. 
The work in~\cite{Tatarenko2015} proposes and analyzes a decentralized gradient method based on the push-sum consensus
in deterministic and stochastic setting for unconstrained problems.

\section{Conclusion and Open Problems} \label{sec:conclusion}
We have discussed decentralized optimization methods for minimizing the average of the nodes' objectives over graphs. We have considered undirected and directed time varying graphs, and computational models for solving consensus problem in such graphs. Then, we have discussed decentralized optimization algorithms that combine optimization techniques with decentralized averaging algorithms. We have also discussed extensions of the consensus-based approaches and other decentralized optimization algorithms.

In terms of algorithm scalability with the number $n$ of nodes, at present, it is an open question whether any improvement on the quadratic convergence time of 
Proposition~\ref{diff-graphs} is possible without an additional assumption about the knowledge of $n$ \ao{(recall that the assumption of knowing a reasonable upper bound on $n$ was made in Theorem~\ref{thm:accelerate} to show a linear scaling with $n$ in the convergence time)}. Also, it is not known whether a linear convergence-time scaling can be obtained for time-varying graphs.

Another question for future research is the implementation of decentralized algorithms with lower communication requirements. In particular, even broadcast based communications can be expensive, in terms of 
the power needed to broadcast in some sensor networks. 
A question is how to implement decentralized algorithms with fewer communications,
and what trade-offs are involved in such implementations. Some initial investigations along these lines were presented in~\cite{Tsianos2012communication} in the context of stochastic optimization, where progressively more time is spent calculating gradients between each round of communication as the number of iterations progresses. There remains much further work to be done along these lines. 

Finally, we remark that although there are well-understood lower bounds on the number of iterations required to achieve an $\epsilon$-optimal solution in the context of centralized convex optimization~\cite{nemyud,NesterovIntro}, much less is understood about the fundamental limits of decentralized optimization. Although bounds on the number of iterations for centralized algorithms carry over directly to synchronous decentralized algorithms, since any decentralized algorithm can always be emulated on a centralized processor, these results do not provide insight into how much communication is fundamentally required to reach consensus on an $\epsilon$-optimal solution. In communication-constrained settings (e.g., where network links have very low bandwidth), it remains an open question as to how many iterations may be required, and a related line of questioning would be to understand when there may be tradeoffs between communication and computation (e.g., to reach an $\epsilon$-optimal solution there may be algorithms which require significant computation and lower communication, or vice versa).

\section*{Acknowledgements}
M.R.~thanks Mido Assran for a careful reading and suggestions that improved this paper.

\bibliographystyle{IEEEtran}
\bibliography{alex,mike,directed-opt,distributed}

\end{document}